\renewcommand{\Cup}{\bigcup}
\renewcommand{\Cap}{\bigcap}
\renewcommand{\a}{\alpha}
\renewcommand{\b}{\beta}
\newcommand{\g}{\gamma}
\newcommand{\e}{\varepsilon}
\renewcommand{\d}{\delta}
\renewcommand{\k}{\kappa}
\renewcommand{\l}{\lambda}
\renewcommand{\o}{\omega}
\newcommand{\es}{\varnothing}
\newcommand{\MM}{\mathcal{M}}
\newcommand{\Po}{\mathcal{P}}             
\newcommand{\ZFC}{{\operatorname{ZFC}}}
\newcommand{\Mod}{{\operatorname{Mod}}}
\newcommand{\Borel}{{\operatorname{Borel}}}
\newcommand{\cf}{\operatorname{cf}}
\newcommand{\id}{\operatorname{id}}
\newcommand{\height}{{\operatorname{ht}}}
\newcommand{\pr}{\operatorname{pr}}
\newcommand{\rest}{\!\restriction\!}
\newcommand{\restl}{\restriction}  
\renewcommand{\le}{\leqslant}
\newcommand{\sd}{\,\triangle\,}             
\newcommand{\Sii}{{\Sigma_1^1}}
\newcommand{\PlOne}{\,{\textrm{\bf I}}}
\newcommand{\PlTwo}{\textrm{\bf I\hspace{-1pt}I}}
\DeclareMathSymbol{\mlq}{\mathord}{operators}{``}
\DeclareMathSymbol{\mrq}{\mathord}{operators}{`'}
\newtheorem*{Thm*}{Theorem}
\newtheorem{Thm}{Theorem}
\newtheorem{Lemma}[Thm]{Lemma}
\newtheorem{Cor}[Thm]{Corollary}
\newtheorem{claim}{Claim}[Thm]
\theoremstyle{definition}
\newtheorem{Def}[Thm]{Definition}
\newtheorem{Question}[Thm]{Question}
\theoremstyle{remark}
\newcommand{\proofvpara}{\text{}}
\newenvironment{proofVOf}[1] {\vspace{5pt}\noindent \textit{Proof.}\ignorespaces\renewcommand{\proofvpara}{\text{#1}}}
{\nopagebreak\hspace*{\fill}\mbox{$\square_{\,\proofvpara}$}\\\vspace{-8pt}}
\author{Sy-David Friedman\footnote{Kurt G\"odel Research Center,
    Vienna. The first and third authors wish to thank the FWF (Austrian Science Fund)
    for its support through Einzelprojekt P24654-N25.}, Tapani
  Hyttinen\footnote{Mathematics Department, University of Helsinki.},
  Vadim Kulikov\footnote{Kurt G\"odel Research Center, Vienna.}}
\title{On Borel Reducibility in Generalised Baire Space}
\begin{document}

\maketitle

\begin{abstract}
  In this paper we study the Borel reducibility of Borel equivalence relations on the generalised
  Baire space $\k^\k$ for an uncountable $\k$ with the property $\k^{<\k}=\k$.
  The theory looks quite different from its classical counterpart
  where $\k=\o$, although some basic theorems 
  do generalise.
\end{abstract}

We study the generalisations of classical descriptive set theory of Polish spaces
to the setting where instead of the Baire space $\o^\o$ we look at the 
generalised Baire space $\k^\k$ of all functions from $\k$ to $\k$ where $\k$ is an uncountable
cardinal which satisfies $\k^{<\k}=\k$. The topology on this space is generated by the
basic open sets
$$[p]=\{\eta\in \k^\k\mid \eta\supset p\}$$
where $p\in \k^{<\k}$. The resulting collection of open sets is closed
under intersections of length $<\k$. 
The class of $\k$-Borel sets in this space is the smallest class containing the basic open sets
and which is closed under taking unions and intersections of length~$\k$.

In this paper we often work with spaces of the form $(2^\a)^\b$ for some ordinals $\a,\b\le\k$.
If $x\in (2^\a)^\b$, then technically $x$ is a function $\b\to 2^\a$ 
and we denote by $x_\g=x(\g)$ the value at $\g<\b$.
Thus $x_\g$ is a function $\a\to 2$ for each $\g$ and we denote the value at $\d<\a$ by $x_\g(\d)$.
The lengthier notation for $x\in (2^\a)^\b$ is $(x_\g)_{\g<\b}$ as a $\b$-sequence of functions $\a\to 2$.

We say that a topological space is \emph{$\k$-Baire}, if the intersection of $\k$ many
dense open sets is never empty. The generalised Baire space is $\k$-Baire~\cite{MekVaa}.
If $X$ is a topological space, we say that $A\subseteq X$ is \emph{$\k$-meager} if its complement
contains an intersection of $\k$ many dense open sets. Thus, $X$ is $\k$-Baire if and only if
$X$ is itself not meager (we always drop the prefix ``$\k$-''). 
The complement of a meager set is called
\emph{co-meager}. A set $A\subseteq X$ has the \emph{Baire property} if 
there exists an open set $U$ such that the symmetric difference $U\sd A$ is meager. 
When we write $\forall^*x\in A(P(x))$ we mean 
that there is a co-meager 
set such that every element of that set which belongs to $A$ 
satisfies $P$.
We write $\exists^*x\in A(P(x))$ to mean that there exists a
non-meager set such that every element of that set which belongs to
$A$ satisfies $P$.

\section{Equivalence Relations Induced by a Group Action}
\label{sec:Action}

Suppose $G$ is a topological group. Let $X$ be a Borel subset of $\k^\k$. 
An action $\rho\colon G\times X\to X$ is
\emph{Borel} if it is Borel as a function i.e. inverse images of open sets are Borel.
This action induces an equivalence relation on $X$ in which two elements $x$ and $y$ are equivalent
if there exists $g\in G$ such that $\rho(g,x)=y$. For example, if the action is Borel, then it is easy
to see that this equivalence relation is~$\Sii$.
This equivalence relation is denoted by $E^X_{G,\rho}$, or just $E^X_G$
if the action is clear from the context.

Here are some examples of equivalence relations induced by a Borel action:
\begin{itemize}
\item $\id$ the identity relation.
\item $\id^+$ the jump of identity. This is an equivalence relation on $(2^\k)^\k$
  where $(x_\a)_{\a<\k}$ and $(y_\a)_{\a<\k}$ are equivalent if the sets
  $\{x_\a\mid \a<\k\}$ and $\{y_\a\mid \a<\k\}$ are equal (see Definition~\ref{def:jump}).
  This is not defined as an equivalence relation induced by a Borel action, but is 
  easily seen to be bireducible with $\id^+_*$ which is an equivalence relation
  on $(2^\k)^\k$ where
  $(x_\a)_{\a<\k}$ and $(y_\a)_{\a<\k}$ are equivalent if 
  there exists a permutation
  $s\in S_\k$ ($S_\k$ is the group of all permutations of $\k$) such that $x_\a=y_{s(\a)}$
  for all~$\a$. The latter is induced by a Borel action of $S_\k$.
\item $E_0$ an equivalence relation on $2^\k$ where $(\eta,\xi)\in E_0$ if there exists $\a<\k$
  such that for all $\b>\a$ we have $\eta(\b)=\xi(\b)$.
\item $E_1$ an equivalence relation on $(2^\k)^\k$ where $(x_\a)_{\a<\k}$ and $(y_\a)_{\a<\k}$ are equivalent
  if there exists $\a<\k$ such that for all $\b>\a$ we have $x_\b=y_\b$.
\end{itemize}

Since all the topologies in this paper are closed under intersections of length $<\k$,
we replace ``finite'' by
``less than $\k$'' when referring to product topologies below.\label{page:producttopology} 

\begin{Thm}\label{thm:ActionToE0}
  Let $G$ be a discrete group of cardinality $\le\k$ and let
  it act in a Borel way on a Borel subset $X\subseteq 2^\k$. Let $E^X_G$
  be the (Borel) equivalence relation induced by this action.
  Then $E^X_G\le_B E_0$.
\end{Thm}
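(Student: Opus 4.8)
The plan is to present $E^X_G$ as a continuous increasing union of equivalence relations with small classes and to push the resulting ``eventual stabilisation'' structure down to $E_0$, which I think of concretely as eventual equality on $2^\kappa$, i.e.\ as the orbit equivalence of the bounded-support elements of $2^\kappa$ acting by coordinatewise addition. First I would fix an enumeration $G=\{g_\xi\mid\xi<\kappa\}$ with $g_0=e$; since $\kappa^{<\kappa}=\kappa$ forces $\kappa$ to be regular, I may set $G_\alpha=\langle g_\xi\mid\xi<\alpha\rangle$ and obtain $G=\bigcup_{\alpha<\kappa}G_\alpha$ with each $|G_\alpha|<\kappa$. The subgroups $G_\alpha$ act in a Borel way on $X$, so the orbit equivalence relations $F_\alpha:=E^X_{G_\alpha}$ are Borel, increase with $\alpha$, have all classes of size $<\kappa$, and satisfy $E^X_G=\bigcup_{\alpha<\kappa}F_\alpha$. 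Crucially, if $x\mathbin{E^X_G}y$ then $y=\rho(g_\xi,x)$ for a single $\xi$, whence $x\mathbin{F_\alpha}y$ for every $\alpha>\xi$: equivalent points are captured by a \emph{tail} of the approximations, while inequivalent points are separated by \emph{all} of them. This ``tail versus cofinal'' dichotomy is exactly what $E_0$ is designed to record.

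The coding I would then attempt is the standard one. Suppose one has a uniformly Borel family of invariants $\phi_\alpha\colon X\to 2^\kappa$ for the $F_\alpha$, meaning each $\phi_\alpha$ is $F_\alpha$-invariant and separates distinct $F_\alpha$-classes. Using a bijection $\la\alpha,\delta\ra\mapsto\gamma$ of $\kappa\times\kappa$ with $\kappa$, define $f(x)$ by placing $\phi_\alpha(x)$ into the block indexed by $\alpha$. If $x\mathbin{E^X_G}y$, then $x\mathbin{F_\alpha}y$ for all $\alpha$ beyond some $\alpha_0$, so $\phi_\alpha(x)=\phi_\alpha(y)$ there and the two codes agree on every block past $\alpha_0$; if $(x,y)\notin E^X_G$, then $\phi_\alpha(x)\neq\phi_\alpha(y)$ for \emph{every} $\alpha$, so the codes disagree in every block. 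Thus $f$ reduces $E^X_G$ to eventual equality of the $\kappa$-sequence of blocks. The point to be careful about is that this target is a priori the block-level relation $E_1$, not the bit-level relation $E_0$: a bounded set of \emph{blocks} below $\alpha_0$ still carries up to $\kappa$ many bits, so eventual agreement of blocks is weaker than eventual agreement of bits.

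The difficulty, and the place where the uncountable theory must diverge from the classical one, is precisely the two hidden demands of the previous paragraph: producing the separating invariants $\phi_\alpha$ (smoothness of the $F_\alpha$), and converting block-level eventual equality into genuine $E_0$ on $2^\kappa$. Neither is free. A class $\{\rho(g,x)\mid g\in G_\alpha\}$ is a subset of $2^\kappa$ of size $<\kappa$ which need not contain a lexicographically least element, and a group of size $\aleph_0<\kappa$ can already realise a copy of $E_0$, which admits no Borel transversal; so the $F_\alpha$ are in general non-smooth and honest selectors do not exist. Equivalently, the classical obstruction---that the orbit equivalence of, say, a non-amenable group is \emph{not} reducible to $E_0$---is exactly what has to be bypassed, so no reduction of this kind can go through unchanged at $\omega$. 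I expect the resolution to exploit the room of the uncountable setting: instead of exact invariants one should build, for each $\alpha$, a Borel approximation whose ambiguity and whose response to moving within an orbit are confined to a \emph{bounded} initial set of coordinates, processing the at most $\kappa$ many group elements one at a time and using $\kappa^{<\kappa}=\kappa$ together with $\kappa$-Baireness to keep the perturbations bounded. Making such an approximation simultaneously stable along each orbit (so the agreement direction lands in $E_0$, not merely $E_1$) and faithful across distinct orbits (so the separation direction survives) is the crux of the argument and the step I expect to absorb the bulk of the work; any attempt to literally write the orbit into $2^\kappa$ runs aground on the same tension, between the enumeration-independence demanded by stability and the faithful recording of length-$\kappa$ members demanded by separation.
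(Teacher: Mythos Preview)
Your framework is the right one---filter $G$ by subgroups $G_\a$ of size $<\k$, note that $E^X_G=\Cup_\a F_\a$ with $F_\a=E^X_{G_\a}$, and try to send $x$ to the sequence of $F_\a$-invariants---but you stop exactly at the real obstacle and do not overcome it. You need, for each $\a$, a Borel complete invariant $\phi_\a$ for $F_\a$, and as you yourself point out, the $F_\a$ need not be smooth: a single countable $G_\a$ acting on $X\subseteq 2^\k$ can already produce a copy of $E_0$, which admits no Borel selector. Your closing paragraph describes a hope (``bounded perturbations'', ``process the group elements one at a time'') rather than a construction, and nothing in it explains how to manufacture the $\phi_\a$.

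The paper's resolution is a concrete manoeuvre you do not mention: before approximating, change the action. Two easy reductions replace the given action first by the right-multiplication action of $G$ on $\Po(G)^\k$ (send $x$ to the sequence of sets $\{g\in G\mid gx\in[\pi(\a)]\}$), and then, via a surjection $F_\k\twoheadrightarrow G$, by the shift action of the free group $F_\k$ on $(2^\k)^{F_\k}$. The point of this detour is that for the shift action the restriction $x\mapsto x\rest\a:=(x(g)\rest\a)_{g\in F_\a}$ is $F_\a$-equivariant and lands in $X_\a=(2^\a)^{F_\a}$, a set of cardinality $\le 2^{<\k}=\k$. On a set of size $\le\k$ every equivalence relation is trivially smooth: fix any well-order $<_\a$ of $X_\a$ and let $x(\a)$ be the $<_\a$-least element of the $F_\a$-orbit of $x\rest\a$. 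These are the missing $\phi_\a$. A Fodor argument (each $g_\a\in F_\a$ lies in some $F_{\g(\a)}$ with $\g(\a)<\a$ for limit $\a$) then converts eventual agreement of the $x(\a)$ into a single group element witnessing equivalence.

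Your $E_1$-versus-$E_0$ worry also dissolves in this setup: since $|X_\a|\le\k$, each invariant $x(\a)$ is coded by a single ordinal $f_\a(x(\a))<\k$, so the output lives in $\k^\k$ with genuine coordinatewise $E_0$, not block-level $E_1$. (Independently, the paper proves $E_1\sim_B E_0$ for uncountable $\k$, so the distinction is harmless here anyway.) The moral you should take away: do not try to smooth the approximations on the original space; instead, transport the problem to an action whose natural ``level-$\a$'' quotient has size $\le\k$, where smoothness is free.
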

\begin{proofVOf}{Theorem~\ref{thm:ActionToE0}}
  The group $G$ acts on $\Po(G)^\k$ coordinatewise by multiplication
  on the right $g\cdot(X_i)_{i<\k}=(X_i g)_{i<\k}$. This gives rise to
  the equivalence relation $E^{P(G)^\kappa}_G$.

  \begin{claim}\label{cl:Ac1}
    $E^X_G\le_B E^{\Po(G)^\k}_G$.
  \end{claim}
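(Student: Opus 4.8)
The plan is to produce a single explicit Borel map $f\colon X\to\Po(G)^\k$ witnessing the reduction, arranging that it is injective and (anti-)equivariant, so that both directions of $x\mathrel{E^X_G}y\iff f(x)\mathrel{E^{\Po(G)^\k}_G}f(y)$ become formal. The guiding idea is the standard coding of a point of $X$ by the way its whole $G$-orbit meets a fixed separating family of clopen sets; here $X\subseteq 2^\k$ comes with the separating clopen sets $U_i=\{z\in 2^\k\mid z(i)=1\}$ for $i<\k$.

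Concretely, I would set $f(x)=(X^x_i)_{i<\k}$, where
$$X^x_i=\{g\in G\mid \rho(g,x)\in U_i\}\subseteq G.$$
A one-line computation gives the equivariance I want: $X^{\rho(h,x)}_i=\{g\mid\rho(gh,x)\in U_i\}=X^x_i\,h^{-1}$, i.e. $f(\rho(h,x))=h^{-1}\cdot f(x)$ in the right-multiplication action of the claim. Injectivity is equally cheap: if $f(x)=f(y)$, then reading off membership of the identity $1_G$ in each coordinate and using $\rho(1_G,\cdot)=\id$ gives $x\in U_i\iff y\in U_i$ for all $i<\k$, so $x=y$ because the $U_i$ separate points.

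These two facts give the reduction directly. If $y=\rho(h,x)$ then $f(y)=h^{-1}\cdot f(x)$, so $f(x)\mathrel{E^{\Po(G)^\k}_G}f(y)$; conversely, if $g\cdot f(x)=f(y)$, then taking $h=g^{-1}$ and using equivariance yields $f(\rho(h,x))=g\cdot f(x)=f(y)$, whence $\rho(h,x)=y$ by injectivity and $x\mathrel{E^X_G}y$.

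The step I expect to require the most care is verifying that $f$ is Borel. I would identify $\Po(G)^\k$ with the generalised Cantor space $2^{G\times\k}\cong 2^\k$ (using $|G|\le\k$ and $\k\cdot\k=\k$), and reduce Borelness of $f$ to Borelness of each subbasic preimage $\{x\in X\mid g\in X^x_i\}=\{x\mid\rho(g,x)\in U_i\}$, for $(g,i)\in G\times\k$. Each of these is the preimage of the clopen set $U_i$ under the section $x\mapsto\rho(g,x)$, which is Borel since $\rho$ is Borel and, $G$ being discrete, the section of a Borel subset of $G\times X$ at a fixed $g$ is Borel. Finally, since every basic open set of $2^{G\times\k}$ is an intersection of $<\k$ such conditions and every open set is a union of at most $\k=\k^{<\k}$ basic open sets, closure of the Borel sets under unions and intersections of length $\k$ shows that $f^{-1}$ of every open set is Borel. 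Beyond this, the only thing to watch is the bookkeeping of left versus right multiplication, so that the orbit map comes out equivariant for the action as defined in the claim rather than its opposite.
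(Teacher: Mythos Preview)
Your proposal is correct and follows essentially the same approach as the paper: encode each $x$ by the family $\{g\in G\mid gx\in V\}$ as $V$ ranges over a fixed separating family of clopen sets, and observe that this is (anti-)equivariant for the right-multiplication action. The only cosmetic differences are that the paper uses all basic open sets $[p]$, $p\in 2^{<\k}$, in place of your subbasic $U_i=\{z\mid z(i)=1\}$, and verifies the backward direction by a direct computation which, unwound, is exactly your injectivity-plus-equivariance argument.
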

  \begin{proofVOf}{Claim~\ref{cl:Ac1}}
    Let $\pi\colon \k\to 2^{<\k}$ be a bijection.
    Let $x\in X$ and for each $\a<\k$ let 
    $$Z_\a(x)=\{g\in G\mid gx\in [\pi(\a)]\}.$$
    This defines a reduction: an element $x\in X$ is mapped to $(Z_\a(x))_{\a<\k}$.
    Suppose there is $g_0\in G$ such that $y=g_0x$ for some $x,y\in X$.
    Then
    \begin{eqnarray*}
      Z_\a(x)&=&\{g\in G\mid gx\in [\pi(\a)]\}\\
             &=&\{gg_0\in G\mid gy\in [\pi(\a)]\}\\
             &=&Z_\a(y)g_0.
    \end{eqnarray*}
    On the other hand suppose that there exists
    $g\in G$ such that $Z_\a(x)=Z_\a(y)g$ for all $\a<\k$. It is enough to show that
    $g^{-1}y\in [p]$ for all basic open neighbourhoods $[p]$ of $x$. So suppose $U=[p]$ is a basic neighbourhood
    containing $x$ and let $\a=\pi^{-1}(p)$. Now obviously $1_G\in Z_\a(x)$, so
    $1_G\in Z_\a(y)g$ and thus $g^{-1}\in Z_\a(y)$, i.e. $g^{-1}y\in [p]$.
  \end{proofVOf}

  For a set $S$, $F_S$ is the free group generated by elements of $S$. 
  $F_{\es}=F_0$ is the trivial group.

  \begin{claim}\label{cl:Ac2}
    $E^{\Po(G)^\k}_G\le_B E^{\Po(F_\k)^\k}_{F_\k}$.
  \end{claim}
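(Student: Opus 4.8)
The plan is to realise the reduction as pullback along a surjective homomorphism. Since $F_\k$ is free on $\k$ generators $(e_\a)_{\a<\k}$ and $|G|\le\k$, I can enumerate $G=\{g_\a\mid\a<\k\}$ (allowing repetitions if $|G|<\k$) and let $\f\colon F_\k\to G$ be the unique homomorphism determined by $\f(e_\a)=g_\a$; this $\f$ is surjective. I then propose the reduction
$$R\big((X_i)_{i<\k}\big)=\big(\f^{-1}[X_i]\big)_{i<\k},\qquad \f^{-1}[X_i]=\{h\in F_\k\mid \f(h)\in X_i\}.$$
Borelness is immediate and in fact $R$ is continuous: for a fixed $h\in F_\k$ the condition $h\in\f^{-1}[X_i]$ is literally $\f(h)\in X_i$, so each coordinate of the output is read off from a single coordinate of the input.

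The heart of the argument is two identities for $\f$. The first is that preimage intertwines the two right actions: for $A\subseteq G$ and $f\in F_\k$ with $g=\f(f)$ one has $\f^{-1}[Ag]=\f^{-1}[A]\,f$, which I would verify by unwinding membership using $\f(hf^{-1})=\f(h)g^{-1}$. This gives the forward direction of the reduction: if $(X_i)$ and $(X'_i)$ are $E^{\Po(G)^\k}_G$-equivalent, say $X'_i=X_ig$ for all $i$, then picking any $f\in\f^{-1}(g)$ yields $\f^{-1}[X'_i]=\f^{-1}[X_i]\,f$ for all $i$, so $R((X'_i))=f\cdot R((X_i))$ and the images are $E^{\Po(F_\k)^\k}_{F_\k}$-equivalent.

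The second identity is $\f[\f^{-1}[A]]=A$, valid precisely because $\f$ is surjective, together with the elementary $\f[Bf]=\f[B]\,\f(f)$. These handle the reverse direction: if the images are equivalent via some $f\in F_\k$, i.e.\ $\f^{-1}[X'_i]=\f^{-1}[X_i]\,f$ for all $i$, then applying $\f$ to both sides and using these identities gives $X'_i=X_i\,\f(f)$ for all $i$, so the originals are $E^{\Po(G)^\k}_G$-equivalent via $g=\f(f)$. The two directions together yield $x\mathrel{E^{\Po(G)^\k}_G}y\iff R(x)\mathrel{E^{\Po(F_\k)^\k}_{F_\k}}R(y)$, which together with continuity of $R$ establishes the claim.

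I expect the only genuinely delicate points to be the existence of the surjection $\f$ (this is where freeness of $F_\k$ and the hypothesis $|G|\le\k$ are used, and it is essential: surjectivity is what makes $\f[\f^{-1}[A]]=A$ hold and hence drives the reverse direction) and keeping the left/right conventions consistent so that pullback along $\f$ produces a \emph{right} translation matching the given actions. Everything else is routine verification.
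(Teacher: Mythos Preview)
Your proposal is correct and is essentially the same as the paper's proof: both construct a surjective homomorphism $F_\k\to G$ (the paper writes $G\cong F_\k/N$ and uses the canonical projection $\pr$) and define the reduction by $(A_\a)_{\a<\k}\mapsto(\pr^{-1}A_\a)_{\a<\k}$. The paper simply declares this to be ``clearly a continuous reduction'' without spelling out the two directions, whereas you have supplied the verification; your argument is sound and more detailed than the original.
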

  \begin{proofVOf}{Claim \ref{cl:Ac2}}
    Since $G$ has size $\le\k$ and $F_\k$ is a free group on $\k$ generators,
    there is a normal subgroup $N\subseteq F_\k$ such that $G\cong F_\k/N$.
    Assume without loss of generality that $G=F_\k/N$.
    Let $\pr$ be the canonical projection map $F_\k\to F_\k/N$.
    For $(A_\a)_{\a<\k}\in \Po(G)^{\k}$, let
    $$F((A_\a)_{\a<\k})=(\pr^{-1}A_\a)_{\a<\k}.$$
    This is clearly a continuous reduction.    
  \end{proofVOf}

  \begin{claim}\label{cl:Ac3}
    $E^{\Po(F_\k)^\k}_{F_\k}\le_B E_0$.
  \end{claim}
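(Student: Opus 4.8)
The plan is to run everything through the tree structure of the free group. Every element of $F_\k$ is a unique reduced word, and for the word metric $d(x,y)=|x^{-1}y|$ right translation by $g$ displaces each point by exactly $|g|$, which is \emph{finite}. Hence multiplying on the right by a fixed $g$ changes the word length $|w|$ by at most $|g|$ and, crucially, preserves every long enough initial segment: if $w=uv$ is reduced with $|v|>|g|$, then $wg$ still begins with $u$. So right translation is trivial ``at infinity'', i.e. on the ends of the tree, and alters only bounded tails. The finiteness of $|g|$ is exactly the phenomenon that is unavailable in the classical $\k=\o$ setting and is what will let us land inside $E_0$.

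Concretely, I would first record that the relation is a countable increasing union $E^{\Po(F_\k)^\k}_{F_\k}=\Cup_{n<\o}R_n$, where $(A_\a)_\a\mathrel{R_n}(B_\a)_\a$ iff there is $g$ with $|g|\le n$ and $B_\a=A_\a g$ for all $\a$; each $R_n$ has classes of size $\le\k$ since the ball of radius $n$ in $F_\k$ has size $\le\k$. The target $E_0$ on $2^\k$ is likewise the increasing union of the relations ``agree on the final segment $[\b,\k)$''. So the strategy is to build a single Borel map $\Phi\colon\Po(F_\k)^\k\to 2^\k$ under which passing from $A$ to $Ag$ changes $\Phi(A)$ only on a subset of $\k$ that is \emph{bounded} (with the bound controlled by $|g|$); any such $\Phi$ that also reflects the relation is a reduction to $E_0$. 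Note that the naive encoding $(\a,w)\mapsto\chi_{A_\a}(w)$ cannot work, because $A$ and $Ag$ have unbounded symmetric difference; the content is therefore a \emph{recoding} that converts finite displacement into bounded change.

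For the recoding I would read off the membership data of each $A_\a$ in an almost-equivariant way, using that long prefixes of reduced words are preserved by right translation. The idea is to stratify $F_\k$ by word length and encode, level by level, the trace of $A_\a$ together with enough prefix information that replacing $A_\a$ by $A_\a g$ merely shifts the reading by the bounded amount $|g|$ rather than scrambling it; a right-invariant structure on $F_\k$ (free groups are orderable) can be used to make the relevant features of each $A_\a$ move covariantly with $g$ up to a finite correction. All $\k$ coordinates are packed into the same $2^\k$ via a bijection $\k\times\k\to\k$ chosen compatibly with the stratification, so that a single $g$ acts coherently across coordinates.

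The hard part will be this finite correction together with coherence across coordinates. On the levels whose length lies within $|g|$ of the cancellation locus, right multiplication does not merely shift but genuinely rearranges elements, and one must verify that the resulting change in $\Phi(A)$ stays within a set that is \emph{bounded} in $\k$, not merely of size $<\k$; this is the crux of the whole argument. For the converse direction I would recover $g$ from the agreement of $\Phi(A)$ and $\Phi(B)$ on a final segment: since right translation is trivial on the ends of the tree, the common behaviour at infinity pins down a single $g\in F_\k$ with $B_\a=A_\a g$ for all $\a$, which is what forces $E_0$-equivalent images to come from genuinely $E^{\Po(F_\k)^\k}_{F_\k}$-equivalent tuples. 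Borelness of $\Phi$ is then routine, as it is described level by level.
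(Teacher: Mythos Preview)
Your proposal has a real gap at exactly the point you flag as ``the crux''. Stratifying $F_\k$ by word length gives an $\o$-indexed filtration each of whose levels already has size $\k$ (the ball of radius $1$ contains $2\k$ elements), so ``bounded displacement in word length'' cannot be converted into ``change confined to a bounded initial segment of $\k$''. Your prefix observation is correct---long initial segments of reduced words survive right multiplication---but those initial segments are themselves strings of generators whose indices range over all of $\k$, so merely recording them already spreads information unboundedly across $\k$. A decisive sanity check: nothing in your outline uses the uncountability or regularity of $\k$; the Cayley tree, the finiteness of $|g|$, and a right-invariant order are all equally available for $F_\o$ acting on $\Po(F_\o)^\o$. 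But for $\k=\o$ the relation in question sits above $E_\infty$ (run the analogues of Claims~\ref{cl:Ac1} and~\ref{cl:Ac2} with the shift of $F_2$ on $2^{F_2}$) and hence is \emph{not} Borel reducible to $E_0$. So any argument along your lines must break, and it breaks precisely at the boundedness step you left open.

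The paper's proof uses a different filtration: $F_\k=\Cup_{\a<\k}F_\a$ with $F_\a$ the subgroup generated by the first $\a$ generators, which is $\k$-indexed with $|F_\a|<\k$. One restricts $x\in(2^\k)^{F_\k}$ to $x\restl\a\in(2^\a)^{F_\a}$ and, since the $F_\a$-orbit of $x\restl\a$ has size $<\k$, Borel-selects its least element under a fixed well-order of $(2^\a)^{F_\a}$; the reduction sends $x$ to the $\k$-sequence of these representatives. Any $g\in F_\k$ lies in some $F_\b$, so for $\a>\b$ the restrictions of $x$ and $g*x$ have the same $F_\a$-orbit, giving the forward direction. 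For the converse, if the representatives agree for all $\a>\b$ one gets witnesses $g_\a\in F_\a$ with $x\restl\a=g_\a*(y\restl\a)$; Fodor's lemma applied to $\a\mapsto\min\{\g:g_\a\in F_\g\}$ on limit $\a$ stabilises the $g_\a$ on a stationary set, yielding a single $g_*$ with $x=g_* *y$. Both the Borel selection (needing $|F_\a|<\k$) and the pressing-down step genuinely require $\k>\o$, and this is what your word-length approach misses.
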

  \begin{proofVOf}{Claim \ref{cl:Ac3}}
    The space $\Po(F_\k)^\k$ can be canonically thought to be the same as
    $(2^\k)^{F_\k}$ the bijection being defined by
    $(A_\a)_{\a<\k}\mapsto x$ where $x(g)(\a)=1$ if and only if
    $g\in A_\a$. This space is equipped with the product topology (recall that
    our definition of product topology is non-standard, see page~\pageref{page:producttopology}): 
    a basic open set is
    given by $$\{x\mid x(g)\rest\a=p\}$$
    for some ordinal $\a<\k$, some $g\in F_\k$ and some $p\in 2^{<\k}$
    and the resulting collection of open sets is
    closed under
    intersections of length~$<\k$. The action of $F_\k$ on $(2^\k)^{F_\k}$ is then defined by
    $g*x=y$ where $y(f)(\a)=1$ if and only if $x(fg^{-1})(\a)=1$ for all $f\in F_\k$.
    
    This space is more convenient for us to work with. Additionally we 
    identify $2^\a$ with $\Po(\a)$ for all $\a$ and write
    $2^\a\subseteq 2^\b$, meaning that an element $p$ of $2^\a$ is identified with $q$ in $2^\b$
    where $q$ is just $p$ with ``$\b-\a$'' zeros at the end.
    
    Let us look at the sets $X_\a=(2^\a)^{F_\a}$ for $\a\le \k$. 
    
    If $w\in X_\b$ and $\a<\b$, denote
    by $w\rest\a$ the element $v$ of $X_\a$ such that $v(g)=w(g)\rest \a$ for all $g\in F_\a$.
    Thus, by the identifications we made, $X_\a\subseteq X_\b\subseteq X_\k$ for all $\a<\b<\k$.
    For every $\a<\k$ fix a well-ordering $<_\a$ of $X_\a$.
    Given $g\in F_\a$ and $w\in X_\a$, let $g*w\in X_\a$ be the element such that
    $(g*w)(f)=w(fg^{-1})$ for all $f\in F_\a$. This is an action of $F_\a$ on $X_\a$ and for $\a=\k$
    it coincides with the original action of $F_\k$ on $\Po(F_\k)^\k$ under the mentioned identifications.
    
    Fix $x\in X_\k$.
    For each $\a$, let $x(\a)$ be the $<_\a$-least element of
    $$\{g*(x\rest\a)\mid g\in F_\a\}$$
    and let $H(x)=(x(\a))_{\a<\k}$. We claim that for all $x,y\in
    X_\k$, 
    $y=g*x$ for some $g\in F_\k$ 
    if and only if there exists $\b<\k$ such that for all $\a>\b$, $x(\a)=y(\a)$.
    
    Assume first that such $g\in F_\k$ exists. Then $g\in F_\b$ for some $\b<\k$ and for all 
    $\a>\b$, $g\in F_\a$. Thus, it is obvious that $x(\a)=y(\a)$ for $\a>\b$, because for these $\a$
    $$\{g*(x\rest\a)\mid g\in F_\a\}=\{g*(y\rest\a)\mid g\in F_\a\}.$$

    Assume now that there exists $\b<\k$ such that for all $\a>\b$, $x(\a)=y(\a)$. Then for each $\a>\b$
    there exists $g_\a\in F_\a$ such that $x\rest\a=g_\a*(y\rest \a)$. For each $\a>\b$, let $\g(\a)$
    be the least ordinal such that $g_\a\in F_{\g(\a)}$. 
    If $\a$ is a limit ordinal, then $\g(\a)<\a$ and so there is $\g_0$ and a stationary $S_0\subseteq \lim\k$
    such that for all $\a\in S_0$ we have $g_\a\in F_{\g_0}$. Since $|F_{\g_0}|<\k$, there is a stationary 
    $S\subseteq S_0$ and $g_*\in F_{\g_0}$ such that for all $\a\in S$ we have $g_\a=g_*$.
    Since $S$ is unbounded, this obviously implies that $y=g_* *x$.

    Fix bijections $f_\a\colon X_\a\to \k$ and map each $x\in X_\k$ to the sequence 
    $(f_\a(x(\a)))_{\a<\k}$ and denote this mapping by $G$. By the above we have
    $x=g*y$ for some $g\in F_\k$ if and only if $(G(x),G(y))\in E_0$.
    It remains to show that $G$ is continuous. 

    Suppose $x\in X_\k$ and take an open neighbourhood $U$ of $G(x)$. Then there is $\b$ such that
    $$\{\eta\in\k^\k\mid \forall\a<\b(\eta(\a)=f_\a(x(\a)))\}\subseteq U.$$
    Now, the set $\{y\in F_\k\mid y\rest\b=x\rest\b\}$ is mapped inside $U$ and contains $x$, so it remains
    to show that this set is open, but this follows from the definition of the topology on $(2^\k)^{F_\k}$ 
    in particular that the collection of open sets is closed under intersections of length~$<\k$.
  \end{proofVOf}
\end{proofVOf}

\begin{Thm}[$V=L$]
  There is a Borel equivalence relation $E$ whose classes have size
  $2$, which is smooth (i.e. Borel reducible to $\id$) yet which is
  not induced by a Borel action of a group of size~$\le\k$.
\end{Thm}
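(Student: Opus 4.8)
The plan is to reduce the statement to a failure of the Lusin--Souslin theorem in $\k^\k$. First I observe that if $E$ has all classes of size $2$, then $E$ is induced by a Borel action of a group of size $\le\k$ only if the associated \emph{partner map} $\sigma_E$ --- sending each $x$ to the unique $y\ne x$ with $x\mathrel{E}y$ --- is Borel (the converse implication, that a Borel $\sigma_E$ gives a Borel $\mathbb{Z}/2$-action, is trivial). Indeed, if $G=\{g_\xi\mid\xi<\k\}$ acts in a Borel way with orbits of size $2$, then each section $x\mapsto g_\xi\cdot x$ is Borel, and $\sigma_E(x)=g_{\xi(x)}\cdot x$, where $\xi(x)$ is the least $\xi$ with $g_\xi\cdot x\ne x$. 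Since ``$g_\xi\cdot x=y$'' is Borel for fixed $\xi$, and the class of Borel sets is closed under unions and intersections of length $\k$, this definition shows $\sigma_E$ is Borel. Hence it suffices to build a Borel equivalence relation $E$ with classes of size $2$ which is smooth but whose partner map $\sigma_E$ is not Borel. Note that $\sigma_E$ always has Borel graph (namely $E\setminus\{(x,x)\mid x\in X\}$) and is $\Sii$-measurable, so what is needed is precisely an instance where a function with Borel graph fails to be Borel.

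Next I reduce the construction of such an $E$ to a single gadget: a continuous bijection $g\colon 2^\k\to 2^\k$ whose inverse $g^{-1}$ is not Borel. Given such a $g$, let $X=2^\k\times 2$ (a clopen, hence Borel, copy of $2^\k$) and declare $(\eta,0)$ and $(g(\eta),1)$ to be $E$-equivalent, for each $\eta$. Because $g$ is a bijection this is a well-defined fixed-point-free involution, so every $E$-class has size exactly $2$, with $\sigma_E(\eta,0)=(g(\eta),1)$ and $\sigma_E(\xi,1)=(g^{-1}(\xi),0)$. The relation $E$ is closed (the graph of a continuous map is closed), hence Borel. It is smooth via the Borel map $f(\eta,0)=g(\eta)$, $f(\xi,1)=\xi$: both points of a class are sent to the same value $g(\eta)$, and distinct classes get distinct values because $g$ is injective, so $f$ reduces $E$ to $\id$. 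Finally $\sigma_E$ is not Borel, since its restriction to the second copy is $\xi\mapsto g^{-1}(\xi)$ precomposed and postcomposed with continuous coordinate maps, so a Borel $\sigma_E$ would make $g^{-1}$ Borel. Thus this $E$ witnesses the theorem.

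The main obstacle is therefore the construction, from $V=L$, of a continuous bijection $g\colon 2^\k\to 2^\k$ with non-Borel inverse. A direct length-$\k^+$ diagonalisation against all Borel functions is unavailable, since $g$, and the Borel code of $E$, is only a length-$\k$ object; the non-Borelness of $g^{-1}$ must be forced automatically. The plan is to realise a fixed non-Borel set as a continuous injective image in a bijective way. Concretely, fix a set $A\subseteq 2^\k$ that is $\Dii$ (both $\Sii$ and $\Pii$) but not Borel; such $A$ exist under $V=L$, since Suslin's theorem $\Dii=\Borel$ fails in $\k^\k$. One then builds $g$ continuous and bijective with $g[A_0]=A$ for some clopen $A_0$ --- note this is only possible because both $A=g[A_0]$ and its complement $g[2^\k\setminus A_0]$ are continuous injective images of closed sets, hence $\Sii$, which is exactly the hypothesis $A\in\Dii$. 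Then $g^{-1}$ cannot be Borel, for otherwise $A=(g^{-1})^{-1}[A_0]$ would be Borel. The hard part --- and the place where $V=L$ is genuinely used --- is to make $g$ simultaneously continuous, bijective, and to send the clopen set $A_0$ exactly onto the prescribed non-Borel $\Dii$ set $A$; I expect this to be carried out by a level-by-level, $\diamondsuit$-guided construction of the $\Sii$-presentations of $A$ and $2^\k\setminus A$ together with a coherent partial continuous bijection built along them, and to be where essentially all of the work resides.
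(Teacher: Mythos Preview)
Your reduction in the first two paragraphs is correct and in fact cleaner than the paper's. You show directly that a Borel action of a size-$\le\k$ group forces the partner map $\sigma_E$ to be Borel; the paper instead argues by pigeonhole that $\sigma_E$ must agree with a single section $x\mapsto g\cdot x$ on a non-meager Borel set, which is why the paper needs its bijection $f$ to be non-Borel on \emph{every} non-meager Borel set, whereas for your argument mere non-Borelness of $g^{-1}$ would suffice. (Incidentally, the continuity you demand of $g$ is unnecessary: Borel $g$ already yields a Borel $E$ and a Borel transversal.)

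The gap is your third paragraph. You correctly flag it as the heart of the matter, but the plan --- fix an arbitrary $\Dii$ non-Borel $A$ and then manufacture a continuous bijection of $2^\k$ taking a clopen set exactly onto $A$ via ``a $\diamondsuit$-guided construction along the $\Sii$-presentations'' --- is too vague to assess: it is not clear how one simultaneously arranges injectivity, surjectivity, and $g[A_0]=A$ on the nose, nor that an arbitrary $\Dii$ non-Borel $A$ admits such a realisation. The paper does not go this route. It gives an explicit fine-structural construction: $C\subseteq 2^\k$ is the closed nowhere-dense set of \emph{master codes} (complete well-founded theories of levels $L_\b$ satisfying a fragment of $\ZFC^- + V=L$ with constants for ordinals $<\k$), $O=2^\k\setminus C$, and $f\colon O\to C$ matches the $i$-th element of $O$ with the $i$-th element of $C$ under $<_L$. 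The inverse $f^{-1}$ is Borel --- indeed continuous --- because a master code $y$ for $L_\b$ literally contains, as a single sentence, which real is the $i$-th in $<_L$; the non-Borelness of $f$ on any non-meager Borel set is proved by a $\k$-Cohen-genericity argument (any Borel candidate lives in some $L_{\b_0}$, but for $x$ generic over $L_{\b_0}$ the value $f(x)$ codes an $L_\b$ with $\b>\b_0$ and hence cannot lie in $L_{\b_0}[x]$). So the paper's construction is complete and along quite different lines from what you sketch; your proposal leaves precisely this core step open.
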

\begin{proof}
  \begin{claim}
    There is an open dense set $O\subseteq 2^\k$ and a bijection $f\colon O\to 2^\k\setminus O$
    such that the graph of $f$ is Borel, but $f$ is not Borel as a function on any non-meager Borel set. 
    However the inverse of $f$ is Borel.
  \end{claim}
  \begin{proof}
    We let $O$ be the complement of a certain closed set of ``master codes'' for
    size $\kappa$ initial segments of $L$.
    This is defined as follows. Let
    $\cal L$ be the language of set theory augmented by constant symbols
    $\bar\alpha$ for each ordinal $\alpha<\kappa$. Also let $T_0$
    denote the theory $\ZFC^-$ ($\ZFC$ minus the power set axiom) 
    plus $V=L$ plus the statement 
    ``there are only boundedly many ordinals $\beta$ such that $L_\beta$
    satisfies $\ZFC^-$''. We consider complete, consistent theories $T$ which
    extend $T_0$ and which in addition satisfy the following:
    
    \bigskip
    
    \noindent
    1. There is no $\omega$-sequence of formulas $\varphi_n(x)$
    (mentioning constants $\bar\alpha$ for $\alpha<\kappa$) such that for
    each $n$ both the sentence ``$\exists$!$x \ \varphi_n(x)$'' and the 
    sentence ``$\exists x,y (\varphi_n(x)\wedge\varphi_{n+1}(y)\wedge y\in
    x)$'' belong to $T$.\\
    2. For each $\beta<\kappa$ and formula $\varphi(x)$ (mentioning
    constants $\bar\alpha$ for $\alpha<\kappa$) if the sentences
    ``$\exists$!$x\varphi(x)$'' and ``$\exists x (\varphi(x)\wedge x
    <\bar\beta)$'' both belong to $T$ then so does the sentence
    ``$\exists x(\varphi(x)\wedge x=\bar\gamma)$'' for some
    $\gamma<\beta$.
    
    \bigskip
    
    By identifying sentences of $\cal L$ with ordinals less than $\kappa$
    we can regard theories in $\cal L$ as subsets of $\kappa$. Now let
    $C\subseteq 2^\kappa$ be the set of theories $T$ as above. Then $C$ is
    a closed set. And $C$ is nowhere dense as any set of $\cal
    L$-sentences of size less than $\kappa$ is included in an inconsistent
    such set. 
    
    The theories in $C$ are exactly the first-order theories of structures
    of the form $L_\beta$ in which the constant
    symbol $\bar\alpha$ is interpreted as the ordinal $\alpha$ for each
    $\alpha<\kappa$ and in
    which the axioms of $T_0$ hold. And for each $T$ in $C$ there is a
    unique such model $M(T)$ in which every element is definable from
    parameters less than $\kappa$. 
    
    We are ready to define the function
    $f:O\to C$ where $O$ is the complement of $C$ in $2^\kappa$. List the
    elements of $O$ in $<_L$-increasing order as $x_0,x_1,\ldots$ and list
    the elements of $C$ in $<_L$-increasing order as
    $y_0,y_1,\ldots$; then we set $f(x_i)=y_i$ for each $i<\kappa^+$.
    The inverse of $f$ is Borel because given $y\in C$ we can identify 
    $f^{-1}(y)$ (viewed as a subset of $\kappa$) as the set of $\gamma<\kappa$
    such that the sentence ``$\gamma$ belongs to the $i$-th element
    in the $<_L$-increasing enumeration of $2^\kappa$ where $i$ is
    the order type of the set of $\beta$ such that $L_\beta$ models
    $T_0$'' belongs to (the theory associated to) $y$. Thus the graph
    of $f$ is Borel. If $B$ is a non-meager Borel set and $g$ is a Borel
    function then we claim that $g$ cannot agree with $f$ on $B$: Indeed,
    let $\beta_0$ be so that $L_{\beta_0}$ models $\ZFC^-$ and 
    contains Borel codes for both $B$ and $g$ and let $x\in B\cap O$ be
    $\kappa$-Cohen generic over $L_{\beta_0}$. Then $f(x)=T$ is the theory of a
    model $M(T)=L_\beta$ where $\beta$ is greater than $\beta_0$. But
    $g(x)$ belongs to $L_{\beta_0}[x]$, which by the genericity of $x$ is
    a model of $\ZFC^-$ while $L_{\beta_0}[f(x)]$ does not satisfy
    $\ZFC^-$ as $f(x)=T$ codes the model $L_\beta$. 
  \end{proof}
  Define $xEy$ if and only if $x=y$, $y=f(x)$ or $x=f(y)$. 
  Now $E$ has a Borel transversal, i.e., there is a Borel function $t$
  such that $xEt(x)$ for all $x$ and $xEy$ if and only if $t(x)=t(y)$ for all $x,y$:
  Given $x\in 2^\k$, first decide in a Borel
  way if $x$ is in $O$ or not. If yes, then let $t(x)=x$, otherwise,
  find $f^{-1}(x)$ in a Borel way (since $f^{-1}$ is Borel) and let
  $t(x)=f^{-1}(x)$. This $t(x)$ is a Borel transversal. It follows
  that $E$ is smooth.
  
  Finally, suppose $E$ is given by a Borel action of some
  group $G$ of size at most $\kappa$. Then for each $x\in O$ choose
  $g_x\in G$ such that $f(x)=g_x\cdot x$; then for some fixed $g\in
  G$, $f(x)=g\cdot x$ for non-meager many $x\in O$, contradicting the
  fact that $f$ is not Borel on any non-meager Borel set.
\end{proof}

\begin{Question}
  Is there a Borel equivalence relation with classes of size $\k$ which is not
  reducible to~$E_0$?
\end{Question}

\section{$E_1$ and $E_{club}$}
\label{sec:E1Eclub}

Let $E_1$ be the equivalence relation on $(2^\k)^\k$ where $(x_\a)_{\a<\k}$ and $(y_\a)_{\a<\k}$
are equivalent if there exists $\b<\k$ such that for all $\g>\b$, $x_\g=y_\g$.

\begin{Thm}
  $E_1$ and $E_0$ are bireducible.
\end{Thm}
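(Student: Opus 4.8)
The plan is to prove the two reductions $E_0\le_B E_1$ and $E_1\le_B E_0$ separately; only the second is substantial. The first is immediate: map $\eta\in 2^\k$ to the sequence $x=(x_\a)_{\a<\k}\in(2^\k)^\k$ whose $\a$-th coordinate is the constant function with value $\eta(\a)$, i.e. $x_\a(\xi)=\eta(\a)$ for all $\xi<\k$. Two constant functions agree iff their values agree, so $x_\g=y_\g$ exactly when $\eta(\g)=\xi(\g)$, whence $(x,y)\in E_1$ iff $(\eta,\xi)\in E_0$; the map is continuous because each output coordinate depends on a single input coordinate.

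For $E_1\le_B E_0$ I would follow the template of Claim~\ref{cl:Ac3}: build a reduction coordinate-by-coordinate out of canonical representatives of bounded pieces of $x$, and recover equivalence in the reverse direction by a pressing-down argument. (That uncountability is essential is clear, since for $\k=\o$ this reduction is impossible by the Kechris--Louveau theorem.) Fix for each $\d<\k$ a well-ordering $<_\d$ of the set $(2^\d)^\d$, which has size at most $\k$ by $\k^{<\k}=\k$, fix a partition of $\k$ into consecutive bounded intervals $\langle I_\d\mid\d<\k\rangle$ with $|I_\d|\ge|\d|$, and fix injections $(2^\d)^\d\hookrightarrow 2^{I_\d}$. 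Given $x\in(2^\k)^\k$, let $c_\d(x)$ be the $<_\d$-least $m\in(2^\d)^\d$ for which there is $\l<\d$ with $m_\g=x_\g\rest\d$ for all $\g\in[\l,\d)$; that is, the least matrix agreeing with $(x_\g\rest\d)_{\g<\d}$ on a final segment of rows. Writing $c_\d(x)$ into the block $I_\d$ and concatenating yields $\Phi(x)\in 2^\k$, and since $c_\d(x)$ depends only on $x\rest(\d\times\d)$, the map $\Phi$ is continuous.

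The forward direction is an invariance computation. If $(x,y)\in E_1$, say $x_\g=y_\g$ for all $\g>\b$, then for every $\d>\b$ the set $\{m:\exists\l<\d\,\forall\g\in[\l,\d)\,m_\g=x_\g\rest\d\}$ and the corresponding set for $y$ coincide: any witnessing $\l\ge\b$ for $x$ works verbatim for $y$, and any $\l<\b$ may be replaced by $\b$. Hence $c_\d(x)=c_\d(y)$ for all $\d>\b$, so $\Phi(x)$ and $\Phi(y)$ agree off the bounded set $\Cup_{\d\le\b}I_\d$, giving $(\Phi(x),\Phi(y))\in E_0$.

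The reverse direction is where the real work lies, and it is exactly here that a Fodor argument enters. Assume $(\Phi(x),\Phi(y))\in E_0$; then, because the blocks march to $\k$ and the block encoding is injective, $c_\d(x)=c_\d(y)$ for all $\d$ above some $\d^*$. For each limit $\d>\d^*$ choose $\l_\d<\d$ witnessing that the common value agrees with both $(x_\g\rest\d)$ and $(y_\g\rest\d)$ on $[\l_\d,\d)$, so that $x_\g\rest\d=y_\g\rest\d$ for all $\g\in[\l_\d,\d)$. The map $\d\mapsto\l_\d$ is regressive on a club, so by Fodor there is a stationary $S$ on which $\l_\d$ is constantly $\l^*$; freezing this threshold is the crux, for then for every $\g\ge\l^*$ and every $\xi<\k$, taking $\d\in S$ with $\d>\max(\g,\xi)$ gives $x_\g(\xi)=y_\g(\xi)$, whence $x_\g=y_\g$ for all $\g\ge\l^*$ and $(x,y)\in E_1$. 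The obstacle to anticipate is precisely the tension this resolves: the forward direction forces each row to be eventually \emph{forgotten} (so bounded modifications become invisible), which in any naive block encoding would equally prevent the higher columns of a fixed row from ever being recorded, collapsing the reverse direction; the canonical-representative device together with the stationarity of $S$ is what simultaneously forgets the early rows and, by holding the threshold $\l^*$ fixed cofinally, still pins down every column of every sufficiently late row.
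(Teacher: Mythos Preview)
Your argument is correct and is essentially the paper's proof: send $x$ to the sequence of (codes for) the $E_1^\d$-equivalence classes of the truncations $(x_\g\rest\d)_{\g<\d}$, which is exactly your $c_\d(x)$ up to labelling. The only cosmetic difference is in the backward direction, where the paper proves the contrapositive via closure points of a regressive-type function while you argue directly with Fodor (in the style of Claim~\ref{cl:Ac3}); both are the same use of the regularity of~$\k$.
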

\begin{proof}
  It is obvious that $E_0\le_B E_1$, so let us look at the other direction.

  To simplify notation, we think of $E_{0}$ on $\k^{\k}$: two functions $\eta$
  and $\xi$ are $E_0$-equivalent if the set $\{\a<\k\mid \eta(\a)\ne\xi(\a)\}$
  is bounded. It is easy to see that $E_0$ on $2^\k$ is bireducible with this
  equivalence relation.

  For all limit $\a <\k$, define $E^{\a}_{1}$ to be the equivalence relation
  on $(2^{\a})^{\a}$ approximating $E_1$
  i.e. $(x_{i})_{i<\a}E^{\a}_{1}(y_{i})_{i<\a}$
  if for some $\b <\a$, $x_{i}=y_{i}$ for all $i>\b$.
  Now define the reduction $F\colon (2^{\k})^{\k}\rightarrow\k^{\k}$
  so that for all $(x_{i})_{i<\k}\in (2^{\k})^{\k}$,
  $F((x_{i})_{i<\k})(\a)=0$ if $\a$ is not a limit and otherwise
  it is a code for the $E_1^\a$-equivalence class of $(x_{i}\rest\a)_{i<\a}$.

  Clearly $F$ is continuous and if
  $(x_{i})_{i<\k}E_{1}(y_{i})_{i<\k}$, then also
  $F((x_{i})_{i<\k})$ and $F((y_{i})_{i<\k})$
  are $E_{0}$-equivalent (if $\b<\k$ witnesses the first
  equivalence, it witnesses also the second).

  Also if $(x_{i})_{i<\k}$ and $(y_{i})_{i<\k}$ are not
  $E_{1}$ equivalent, then for all $\a <\k$ there are $\g ,\b <\k$
  such that $\b >\a$ and $x_{\b}(\g )\ne y_{\b}(\g )$.
  Let $f(\a )$ be $max\{ \b ,\g\}$. Now if $\a^{*}<\k$
  is such that for all $\a <\a^{*}$, $f(\a )<\a^{*}$,
  then clearly $(x_{i}\rest\a^{*})_{i<\a^{*}}$ and
  $(y_{i}\rest\a^{*})_{i<\a^{*}}$ are not $E^{\a^{*}}_{1}$-equivalent,
  and thus $F((x_{i})_{i<\k})(\a^{*})\ne F((y_{i})_{i<\k})(\a^{*})$.
  Since the set of such $\a^{*}$ is unbounded,
  $F((x_{i})_{i<\k})$ and $F((y_{i})_{i<\k})$ are not
  $E_{0}$-equivalent. 
\end{proof}

\begin{Def}\label{def:jump}
  If $E$ is an equivalence relation on $2^\k$, its \emph{jump} is the equivalence
  relation denoted by $E^+$ on $(2^\k)^\k$ defined as follows. Two sequences
  $(x_\a)_{\a<\k}$ and $(y_\a)_{\a<\k}$ are $E^+$-equivalent, if 
  $$\{[x_\a]_{E}\mid \a<\k\}=\{[y_\a]_E\mid \a<\k\}$$
  where $[x]_E$ is the equivalence class of $x$ in $E$. Since
  $(2^\k)^\k$ is homeomorphic to $2^\k$ we can assume without loss of generality
  that $E^+$ is also defined on $2^\k$.

  For an ordinal $\a<\k^+$ define $E^{\a+}$ by transfinite induction.
  To begin, define $E^{0+}=E$.
  If $E^{\a+}$ is defined, then $E^{(\a+1)+}=(E^{\a+})^+$. 

  Suppose $\a$ is a limit and $E^{\b+}$
  is defined to be an equivalence relation on $2^\k$ for $\b<\a$.
  Let $X$ be the disjoint union of $\a$ many copies of $2^\k$. 
  Denote the $\b$:th copy
  by $X_\b$, thus $X=\Cup_{\b<\a}X_\b$. Let $h$ be a homeomorphism $X\to 2^\k$.
  Two functions $\eta$ and $\xi$ are defined to be $E^{\a+}$-equivalent, 
  if $h^{-1}(\eta)$ and $h^{-1}(\xi)$
  belong both to the same $X_\b$ and are $E^{\b+}$-equivalent.
  This is called the \emph{join} of the equivalence relations
  $\{E^{\b+}\mid \b<\a\}$ and is denoted $\bigoplus_{\b<\a}E^{\b+}$.
\end{Def}

\begin{Thm}\label{thm:idplnottoe1}
  $E_0<_B \id^+$
\end{Thm}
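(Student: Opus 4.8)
The plan is to prove the two inequalities $E_0\le_B\id^+$ and $\id^+\not\le_B E_0$ separately.

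\emph{The reduction $E_0\le_B\id^+$.} The idea is to send a point to an enumeration of its entire $E_0$-class, so that two points acquire the same range exactly when they are $E_0$-equivalent. Concretely, since $\k^{<\k}=\k$ the index set $I=\{(\a,s)\mid\a<\k,\ s\in 2^\a\}$ has cardinality $\k$; fix a bijection $I\to\k$. For $\eta\in 2^\k$ and $(\a,s)\in I$ let $\xi_{\a,s}$ be the element of $2^\k$ with $\xi_{\a,s}\rest\a=s$ and $\xi_{\a,s}(\b)=\eta(\b)$ for $\b\ge\a$, and set $G(\eta)=(\xi_{\a,s})_{(\a,s)\in I}$. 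The range of $G(\eta)$ is exactly $[\eta]_{E_0}$, so $\eta\,E_0\,\eta'$ iff $G(\eta)$ and $G(\eta')$ have the same range, i.e.\ $G(\eta)\,\id^+\,G(\eta')$. Each coordinate of $G(\eta)$ agrees with $\eta$ off a bounded set and hence depends continuously on $\eta$, so $G$ is a continuous reduction.

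\emph{Reformulating $\id^+\not\le_B E_0$.} First I would record the structural fact that $E_0$ is the increasing union, over $\a<\k$, of the \emph{smooth} equivalence relations $E_0^\a$ defined by $w\,E_0^\a\,w'\iff w\rest[\a,\k)=w'\rest[\a,\k)$: each $E_0^\a$ is classified by the Borel invariant $w\mapsto w\rest[\a,\k)$, the $E_0^\a$ increase with $\a$, and $\bigcup_{\a<\k}E_0^\a=E_0$. Consequently any Borel reduction $f$ of $\id^+$ to $E_0$ pulls these back: the relations $S_\a=(f\times f)^{-1}(E_0^\a)$ are smooth (via $\bar x\mapsto f(\bar x)\rest[\a,\k)$), increase with $\a$, are contained in $\id^+$, and satisfy $\bigcup_{\a<\k}S_\a=\id^+$. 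Hence it suffices to prove that $\id^+$ is \emph{not} an increasing union of $\k$ many smooth Borel equivalence relations — the generalised statement that $\id^+$ is not ``hypersmooth''. I would also record two soft facts proved by a $0$–$1$ law: every $\id^+$-invariant set with the Baire property is meager or comeager (a non-meager such set is comeager in some basic open set, and reorderings together with the density of the range of a generic sequence mix the basic open sets); and consequently $\id^+$ is not smooth on any comeager set, since a Borel reduction $r$ to $\id$ would have all the invariant sets $r^{-1}([u])$ meager or comeager, hence be generically constant, forcing a comeager set into a single — necessarily meager — $\id^+$-class.

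\emph{The crux.} What remains, and what I expect to be by far the hardest part, is to refute the decomposition $\id^+=\bigcup_{\a<\k}S_\a$. The subtlety is that this cannot be done by exhibiting a single bad pair: within one class $C$ the union $\bigcup_\a(S_\a\cap C^2)$ is all of $C^2$, so \emph{every} pair is caught at some level, while no pair of sequences with different ranges can be caught at any level since $S_\a\subseteq\id^+$. Moreover the bare ``union of $\k$ smooth relations'' structure is not by itself contradictory — the just-proved equivalence $E_1\equiv_B E_0$ means that $E_1$ itself admits such a decomposition yet is non-smooth. The extra ingredient must therefore be the $S_\k$-symmetry of the $\id^+$-classes (via the presentation $\id^+_*$), which $E_1$ lacks. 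I would study the Borel ``matching-level'' function $\rho(\bar x,\bar y)=\min\{\a\mid(\bar x,\bar y)\in S_\a\}$, which is a $\k$-valued ultrametric on each class, and argue by a fusion/Baire-category computation — in the spirit of the stationary-set argument used above for $E_1\equiv_B E_0$ — that the permutation symmetry forces $\rho$ to be generically bounded on a comeager family of classes, so that $\id^+$ would coincide with a single smooth $S_{\a^*}$ on a comeager set and hence be generically smooth, contradicting the previous paragraph. Making this symmetry argument precise, and thereby separating the behaviour of $\id^+$ from that of the tail-structured $E_1$, is the main obstacle.
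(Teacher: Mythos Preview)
Your reduction $E_0\le_B\id^+$ is correct and is exactly the paper's map $\eta\mapsto(p+\eta)_{p\in 2^{<\k}}$, written out in coordinates.

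For $\id^+\not\le_B E_0$ there is a genuine gap. Your reformulation as ``$\id^+$ is not an increasing union of $\k$ smooth Borel subrelations'' is valid, and the $0$--$1$ argument showing $\id^+$ is not generically smooth is fine. But the paragraph labelled ``the crux'' is a description of what remains to be shown, not an argument. You correctly observe that the bare hypersmooth shape cannot be the contradiction (since $E_1$ has it), and that the $S_\k$-symmetry must enter; however you do not say \emph{how} symmetry forces the level function $\rho$ to be generically bounded, and the stationary-set trick from the $E_1\equiv_B E_0$ proof does not transfer in any obvious way: there the levels were indexed externally by the coordinate axis of $(2^\k)^\k$, whereas here the $S_\a$ come from an arbitrary Borel $f$ and carry no a~priori relation to the coordinates on which $S_\k$ acts.

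The paper does not go through hypersmoothness at all; it builds the contradiction by a direct fusion. Given a putative Borel reduction $f$, continuous on a comeager $D=\bigcap_iD_i$, one constructs by induction on $i<\k$ ordinals $\g_i$, approximations $x^i,y^i\in(2^{\g_i})^{\g_i}$ and permutations $\pi_i\in S_{\g_i}$ with $x^i_\a=y^i_{\pi_i(\a)}$, such that $f[[x^i]\cap D]$ and $f[[y^i]\cap D]$ already disagree at some coordinate $\b>i$. At a successor step one first extends $x^i,y^i$ to full sequences $x^*,y^*\in D$ that are \emph{not} $\id^+$-equivalent (keeping the first $\g_i$ components matched by $\pi_i$), reads off a new disagreement coordinate $\b>i$ by continuity, and then---this is where the symmetry is actually used---\emph{glues} the freshly added block of $y^*$ onto the end of the $x$-side and vice versa, extending $\pi_i$ to a permutation $\pi_{i+1}$ that swaps the two new blocks. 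The limits $\tilde x,\tilde y$ lie in $D$, are $\id^+$-equivalent via $\pi=\bigcup_i\pi_i$, yet have $f$-images disagreeing at unboundedly many coordinates. This gluing/block-swap move is precisely the concrete content of the ``symmetry argument'' you were pointing at; without it, or some device playing the same role, your outline does not close.
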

\begin{proof}
  The reduction
  is defined by
  $$E_0\le_B \id^+\colon\ \eta\mapsto (p+\eta)_{p\in 2^{<\k}}.$$

  Suppose $f\colon 2^\k\to (2^\k)^\k$ is a Borel reduction from $\id^+$ to~$E_0$.
  There is a co-meager set $D$ on which $f$ is continuous. Without
  loss of generality assume that this $D$ is the intersection
  $\Cap_{i<\k}D_i$ where $D_i$ are dense open.

  For every $i<\k$ we will define ordinals $\g_i$ together with
  sequences $x^i=(x^i_\a)_{\a<\g_i}$ and $y^i=(y^i_\a)_{\a<\g_i}$ where each $x^i_\a,y^i_\a\in 2^{\g_i}$ 
  and permutations $\pi_i\in S_{\g_i}$. These will satisfy the following
  requirements for every $i<j<\k$:
  \begin{enumerate}
  \item $\pi_i\subseteq \pi_j$.
  \item $\g_i\le \g_j$,
  \item For all $\a<\g_i$ we have $x^i_\a\subseteq x^j_\a$ and $y^i_\a\subseteq y^j_\a$.
  \item For all $\a<\g_i$ we have $x^i_\a=y^i_{\pi_i(\a)}$.
  \item Let $[(x^i_\a)_{\a<\g_i}]$ be the set of all $x=(x_\a)_{\a<\k}\in (2^\k)^\k$ 
    such that $x^i_\a\subseteq x_\a$ for all $\a$. There exist $\b>i$, $\d<\k$ and $p,q\in (2^{\d})^{\b+1}$
    such that 
    $$f[[(x^i_\a)_{\a<\g_i}]\cap D]\subseteq [p],$$
    $$f[[(y^i_\a)_{\a<\g_i}]\cap D]\subseteq [q]$$
    and $p(\b)\ne q(\b)$.
  \item $[(x^{i+1}_{\a})_{\a<\g_{i+1}}]\subseteq D_i$ and $[(y^{i+1}_{\a})_{\a<\g_{i+1}}]\subseteq D_i$
  \end{enumerate}
  This will lead to a contradiction as follows. Let $\tilde x = (\tilde x_\a)_{\a<\k}$ be such that
  for every $\a$ we have $\tilde x_\a\rest\g_i=x^i_\a$ if $\g_i>\a$. This is possible
  by (2) and (3). Analogously define $\tilde y$. Now by (1) we can define
  $\pi=\Cup_{i<\k}\pi_i$ which by (4) witnesses that $\tilde x$ and $\tilde y$
  are $\id^+$-equivalent. By (6) they are in $D$ and by continuity in 
  $D$ and by (5) the images $f(\tilde x)$ and $f(\tilde y)$
  cannot be $E_0$-equivalent.

  Let $x^*=(x^*_\a)_{\a<\k}$ and $y^*=(y^*_\a)_{\a<\k}$ be any sequences in $D$ such that $x^*$ is not $\id^+$-equivalent to $y^*$.
  Find these for example as follows: We will define sequences $(\xi_k)_{k<\k}$ and $(\eta_k)_{k<\k}$
  and ordinals $\e_k$ such that for all $k<\k$ we have $\xi_k,\eta_k\in (2^{\e_k})^{\e_k}$, for $k_1<k_2$
  we have $\e_{k_1}<\e_{k_2}$, $\xi_{k_1}\subseteq\xi_{k_2}$ and $\eta_{k_1}\subseteq\eta_{k_2}$, and
  the unions $\Cup_{k<\k}\xi_k$ and $\Cup_{k<\k}\eta_k$
  are in $D$ and not $\id^+$-equivalent. This is easy: Let $\e_0=0$, $\xi_0=\es$ and $\eta_0=\es$. If $\xi_{k}$ and $\eta_k$
  are defined, first extend $\xi_k$ to an element $\xi_{k+1}'\in (2^{\e_{k+1}'})^{\e_{k+1}'}$  
  (for suitable $\e_{k+1}'>\e_{k}$) such that $[\xi_{k+1}']\subseteq D_k$.
  Then extend the first component of $\eta_k$ so that it differs in a diagonal way from every component of $\xi_{k+1}'$.
  After that, extend the result into $\eta_{k+1}\in (2^{\e_{k+1}})^{\e_{k+1}}$ (for suitable $\e_{k+1}>\e_{k+1}'$) so that 
  $[\eta_{k+1}]\subseteq D_k$ and $\e_{k+1}>\e_{k+1}'$.
  Finally extend $\xi_{k+1}'$ to an element of $(2^{\e_{k+1}})^{\e_{k+1}}$ so that the first component of $\eta_{k+1}$ is
  still diagonally different from every component of $\xi_{k+1}$; technically this means that
  $\eta_{k+1}(0)(\a)\ne \xi_{k+1}(\a)(\a)$. 
  At limit $k$ just take the natural limits of the sequences. In this way at the
  $\k$:th limit we obtain $\xi_\k$ and $\eta_\k$ are as required, so
  we can define $x^*=\xi_k$ and $y^*=\eta_\k$. 

  Let $\b$ and $\d$ be such that $f(x^*)(\b)(\d)\ne f(y^*)(\b)(\d)$ 
  which exist because $f$ is assumed to be a reduction
  and $f(x^*)$ and $f(y^*)$ are not $E_0$-equivalent. Now by continuity in $D$ 
  there is $\g_0^*>0$ such that
  $$f[[(x^*_\a\restl\g^*_0)_{\a<\g^*_0}]\cap D]\subseteq [(f(x^*)\rest(\b+1)]$$
and
  $$f[[(y^*_\a\restl\g^*_0)_{\a<\g^*_0}]\cap D]\subseteq [(f(y^*)\restl(\b+1)]$$

  Then we glue $(x^*_\a\rest\g_0^*)_{\a<\g^*_0}$ to the end of $(y^*_\a\rest\g_0^*)_{\a<\g^*_0}$ and vice versa:

  Let $\g_0=\g^*_0+\g^*_0$ and
  for all $\a<\g^*_0$ define 
  \begin{eqnarray*}
      x^{0}_{\a}&=&x^*_{\a}\rest \g_0\\
      y^{0}_{\a}&=&y^*_{\a}\rest \g_0\\
      x^{0}_{\g^*_0+\a}&=&y^*_{\a}\rest\g_0\\
      y^{0}_{\g^*_0+\a}&=&x^*_{\a}\rest\g_0
  \end{eqnarray*}
  Let $\pi_0$ be the permutation which takes $\a$ to $\g_0+\a$ when $\a<\g_0$ and if $\a=\g_0+\e$,
  then $\pi(\a)=\e$.
  So we have defined $\pi_0$, $\g_0$, $(x^{0}_\a)_{\a<\g_0}$ and $(y^{0}_\a)_{\a<\g_0}$ such that
  all the conditions (1)--(6) are satisfied so far.

  Suppose that $\pi_i$, $\g_i$, $(x^i_\a)_{\a<\g_i}$
  and $(y^i_\a)_{\a<\g_i}$ are defined for $i<j$ such that the conditions (1)--(6) are satisfied.
  If $j$ is a limit, then just define $\pi_j=\Cup_{i<j}\pi_i$, $x^j_\a=\Cup_{i'<i<j}x^i_\a$ 
  and $y^j_\a=\Cup_{i'<i<j}y^i_\a$ for some $i'$ such that $\g_{i'}>\a$ and $\g_j=\sup_{i<j}\g_i$
  and $\g_j=\sup_{i<j}\g_i$. 

  Suppose $j$ is a successor, in fact w.l.o.g denote the predecessor by $i$, 
  i.e. $j=i+1$. Next we want to build elements $x^*=(x^*_\a)_{\a<\k}$ in $[(x_i)_{i<\g_i}]\cap D$
  and  $y^*=(y^*_\a)_{\a<\k}$ in $[(y_i)_{i<\g_i}]\cap D$ such that $x^*_\a=y^*_{\pi(\a)}$ for all $\a<\g_i$
  and which are not $\id^+$-equivalent. To do that, define 
  $\e_0=\g_i$, $\xi_0=(x^i_\a)_{\a<\g_i}$ and $\eta_0=(y^i_\a)_{\a<\g_i}$.
  Suppose we have defined $\xi_k$ and $\eta_k$ for some $k<\k$. 

  First we extend $\xi_k$ to $\xi_{k+1}'\in (2^{\e_{k+1}'})^{\e_{k+1}'}$ for
  some suitable $\e_{k+1}'<\k$, $\e_{k+1}'>\e_k$
  so that $[\xi_{k+1}']\subseteq D_k$. Then we extend $\eta_k$ first to
  a $\eta_{k+1}'\in (2^{\e_{k+1}'})^{\e_{k+1}'}$ such that $\eta_{k+1}'\rest\g_i$
  equals to the action of $\pi_i$ applied to $\xi_{k+1}'\rest\g_i$ and $\eta_{k+1}'\rest\{\g_i\}$
  diagonally differs from every component of $\xi_{k+1}'$. Then extend $\eta_{k+1}'$
  to $\eta_{k+1}\in (2^{\e_{k+1}})^{\e_{k+1}}$ (for a suitable $\e_{k+1}>\e_{k+1}'$)
  so that $[\eta_{k+1}]\subseteq D_k$. Finally extend $\xi_{k+1}'$
  to $\xi_{k+1}\in (2^{\e_{k+1}})^{\e_{k+1}}$ in any such way that
  $\eta_{k+1}\rest\{\g_i\}$ differs from every component of $\xi_{k+1}$ in a diagonal way.

  At limit $k$ just take the natural limits of the sequences. In this way at the
  $\k$:th limit we obtain $\xi_\k$ and $\eta_\k$ which are as required, so
  we can define $x^*=\xi_k$ and $y^*=\eta_\k$. 

  Now by continuity and by the fact that $x^*$ and $y^*$ are not $\id^+$-equivalent,
  find $\g_{i+1}^*$ and $\b>i+1$ so that 
  $f(x^*)(\b)\ne f(y^*)(\b)$ and
  $$f[[(x^*_\a\restl\g^*_{i+1})_{\a<\g^*_{i+1}}]\cap D]\subseteq [(f(x^*)\restl(\b+1)]$$
  and
  $$f[[(y^*_\a\restl\g^*_{i+1})_{\a<\g^*_{i+1}}]\cap D]\subseteq [(f(y^*)\restl(\b+1)].$$
  Also we make sure that $\g^*_{i+1}$ is big enough
  so that (6) is satisfied. Now we want to glue a part of
  $(x^*_\a\rest\g_{i+1}^*)_{\a<\g^*_{i+1}}$ to the end of $(y^*_\a\rest\g_{i+1}^*)_{\a<\g^*_{i+1}}$ and vice versa:
  Let $\e=\g_{i+1}-\g_i$, i.e. the order type of $\g_{i+1}^*\setminus \g_i$ and let $\g_{i+1}=\g_{i+1}^*+\e$. 
  Define $x^{i+1}_{\a}$ and $y^{i+1}_\a$ for all $\a<\g_{i+1}$ depending on $\a$ as follows.
  If $\a<\g_{i+1}^*$, let $x^{i+1}_\a$ to be $x^*_\a\rest\g_{i+1}$ and $y^{i+1}_\a$ to be $y^*_\a\rest\g_{i+1}$.
  If $\a=\g_{i+1}^*+\d$ for some $\d<\e$, then let $x^{i+1}_\a$ to be $y^*_{\g_i+\d}$ and $y^{i+1}_{\a}$ to be 
  $x^*_{\g_i+\d}$. This gives us also $\pi_{i+1}$ and we are done.
\end{proof}

\begin{Def}\label{def:E_Cub}
  For a regular cardinal $\mu<\k$ and $\l\in\{2,\k\}$ let $E^\l_{\mu\text{-cub}}$ be the equivalence
  relation on $\l^\k$ such that $\eta$ and $\xi$ are
  $E^\l_{\mu\text{-cub}}$-equivalent if the set $\{\a\mid \eta(\a)=\xi(\a)\}$
  contains a $\mu$-cub, i.e. an unbounded set which is closed under
  $\mu$-cofinal limits. If $T$ is a countable complete first-order
  theory, denote by $\cong^T_\k$ the isomorphism relation on the
  models of~$T$.
\end{Def}

In the following we show that 
\begin{enumerate}
\item The $\a$:th jump of identity for $\a<\k^+$ is reducible to $E^\k_{\mu\text{-cub}}$
  for every regular $\mu<\k$,
\item Every Borel isomorphism relation is reducible to $E^\k_{\mu\text{-cub}}$ for
  every regular $\mu<\k$,
\item If $T$ a countable complete first-order classifiable 
  (superstable with NDOP and NOTOP) and 
  shallow theory, then $\cong^\k_T\,\le_B\, E^\k_{\mu\text{-cub}}$.
\end{enumerate}

\begin{Def}
  Fix a limit ordinal $\a\le \k$ and let $t$ be a subtree of $\a^{<\o}$ with 
  no infinite branches.
  Let $h$ be a function from the leaves of $t$ to $2^{<\a}$. Then $(t,h)$
  determines the set $B_{(t,h)}$ as follows: $p\in 2^\a$ belongs to
  $B_{(t,h)}$ if player $\PlTwo$ has a winning strategy in the game
  $G(p,t,h)$: The players start at the root and then one after another 
  choose a successor of the node they are in and then move to that
  successor. Player $\PlOne$ starts.
  Eventually they reach a leaf $l$ and player $\PlTwo$ wins
  if $h(l)\subset p$. We say that $(t,h)$ is a \emph{Borel code for $\a$}.

  If $\a=\k$,
  it is easy to see by induction on the rank of the tree that $B_{(t,h)}$
  is a usual Borel set and conversely, if $B\subset 2^{\k}$ 
  is any Borel set, then there is
  a Borel code $(t,h)$ for $\k$ such that $B=B_{(t,h)}$. 

  If $t$ is replaced by a more general $\k^+\k$-tree (subtree of $\k^{<\k}$ 
  without branches of length~$\k$), then the sets that
  are obtained in this way
  are the so called $\Borel^*$ sets, see \cite{Bl,MekVaa,Hal,FHK}.

  Suppose $(t,h)$ is a Borel code for $\k$ and $\a<\k$. 
  Say that $\a$ is \emph{good} for $(t,h)$, if for all leaves $l\in t$
  with $\height(l)<\a$ we have $h(l)\in 2^{<\a}$. Clearly the set of good
  $\a$ for a fixed $(t,h)$ is a cub set.
  
  Define the $\a$:th \emph{approximation} of $(t,h)$, denoted $(t,h)\rest\a$
  to be the pair $(t\rest\a,h\rest\a)$ where $t\rest\a=t\cap \a^{<\o}$
  and for all leaves $l$ of $t\rest\a$, $(h\rest\a)=h\rest (t\rest\a)$.
  It is obvious that if $(t,h)$ is a Borel code for $\k$ and $\a<\k$
  is good for $(t,h)$,
  then $(t,h)\rest\a$ is a Borel code for~$\a$.

  By replacing $2^{<\a}$ by $(2^{<\a})^2$ for the range of $h$
  and making necessary changes we
  can define Borel codes for subsets of $(2^\a)^2$.

  Note that the game $G(p,t,h)$ is determined for all $p\in 2^{\a}$
  (this is not the case for general $\Borel^*$-sets).
\end{Def}

Make a similar definition for codes of Borel subsets of 
$2^\k \times 2^\k$.

\begin{Lemma}\label{lem:BorelApprox}
  Suppose that $B = B_{(t,h)}$ is a Borel subset of $2^\k \times 2^\k$. 
  Then
  $$(\eta,\xi)\in B\iff (\eta\rest\a,\xi\rest\a)\in B_{(t,h)\restl\a}$$ 
  for cub-many $\a$ and 
  $(\eta,\xi)\notin B \iff (\eta\rest\a,\xi\rest\a)\notin B_{(t,h)\restl\a}$ 
  for cub-many $\a$.
\end{Lemma}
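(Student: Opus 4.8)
The plan is to fix the pair $p=(\eta,\xi)$ throughout and to analyse the game $G(p,t,h)$, whose payoff we evaluate componentwise: at a leaf $l$, player $\PlTwo$ wins iff $h(l)\subset p$. Recall that $p\in B$ iff $\PlTwo$ has a winning strategy, and that $G(p,t,h)$ together with each of its approximations is determined. It therefore suffices to prove two implications: \emph{(A)} if $\PlTwo$ wins $G(p,t,h)$, then $\PlTwo$ wins $G(p\rest\a,t\rest\a,h\rest\a)$ for cub-many $\a$; and \emph{(B)} if $\PlOne$ wins $G(p,t,h)$, then $\PlOne$ wins $G(p\rest\a,t\rest\a,h\rest\a)$ for cub-many $\a$. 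Granting these, determinacy of the approximated game turns ``$\PlOne$ wins $G(p\rest\a,t\rest\a,h\rest\a)$'' into ``$(\eta\rest\a,\xi\rest\a)\notin B_{(t,h)\rest\a}$''; since any two cub sets meet, (A) and (B) together give that $p\in B$ is equivalent to ``$(\eta\rest\a,\xi\rest\a)\in B_{(t,h)\rest\a}$ for cub-many $\a$'', and dually for $p\notin B$, which is exactly the assertion of the lemma.

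Before addressing (A) and (B) I would record two cub sets that do not depend on any strategy. By hypothesis the set $C_0$ of $\a$ that are good for $(t,h)$ is a cub. Next, for each internal node $s\in t$ let $m(s)<\k$ be least with $s\cat\la m(s)\ra\in t$, and set $c(\g)=\sup\{\,m(s)+1\mid s\in t\text{ internal},\ \height(s)<\g\,\}$. As $\{s\in t\mid \height(s)<\g\}$ has cardinality $<\k$ and $\k$ is regular, $c(\g)<\k$, so the closure points of $c$ form a cub $C_1$. For a limit $\a\in C_1$ every internal node of $t$ of height $<\a$ retains a successor in $t\rest\a$; hence the leaves of $t\rest\a$ are precisely the leaves $l$ of $t$ with $\height(l)<\a$, so $h\rest\a$ is everywhere defined and $(t,h)\rest\a$ is a genuine Borel code for $\a$. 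Moreover, for $\a\in C_0\cap C_1$ each such leaf satisfies $h(l)\in(2^{<\a})^2$, whence $h(l)\subset p$ iff $h(l)\subset p\rest\a$: the payoff reflects.

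For (A), I would fix a winning strategy $\s$ for $\PlTwo$ and let $t_\s\subseteq t$ be the well-founded tree of $\s$-consistent positions (all successors at $\PlOne$-nodes, only the $\s$-successor at $\PlTwo$-nodes); every leaf $l$ of $t_\s$ satisfies $h(l)\subset p$. For a $\PlTwo$-node $s\in t_\s$ let $\ell(s)<\k$ be the label of $\s(s)$, put $d(\g)=\sup\{\,\ell(s)+1\mid s\in t_\s\text{ a }\PlTwo\text{-node},\ \height(s)<\g\,\}$, and let $C_\s$ be the cub of closure points of $d$. For a limit $\a\in C_0\cap C_1\cap C_\s$ I claim $\s$ wins $G(p\rest\a,t\rest\a,h\rest\a)$: any run in which $\PlTwo$ follows $\s$ remains in $t_\s$ (a legal $\PlOne$-move in $t\rest\a$ is a $t$-successor, hence lies in $t_\s$; at a $\PlTwo$-node $s$ the move $\s(s)$ has label $\ell(s)<\a$ by $C_\s$, so it is legal in $t\rest\a$), never halts at a $\PlTwo$-node (its $\s$-successor is available) nor at an internal $\PlOne$-node (by $C_1$ a successor is available), and therefore stops at a leaf of $t\rest\a$, which by $C_1$ is a true leaf $l$ of $t$ lying in $t_\s$; then $h(l)\subset p$ and, by $C_0\cap C_1$, $h(l)\subset p\rest\a$, so $\PlTwo$ wins. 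Implication (B) would be proved verbatim, replacing $\s$, $t_\s$ and $C_\s$ by a winning strategy $\tau$ for $\PlOne$, the tree $t_\tau$ (all of whose leaves satisfy $h(l)\not\subset p$) and the analogous cub $C_\tau$.

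The one genuine subtlety---and the step I expect to be the main obstacle---is that passing from $t$ to $t\rest\a$ can turn an internal node of $t$ into a leaf on which $h$ is undefined, so that a run of the approximated game might halt at a position carrying no payoff. The cub $C_1$ is introduced precisely to forbid this below $\a$, and it is for the same reason that goodness is what makes $(t,h)\rest\a$ a legitimate code; once $C_1$ is in hand, the rest is the thrice-repeated routine that the closure points of a function $\k\to\k$ form a cub, together with the bookkeeping showing that a winning strategy survives restriction.
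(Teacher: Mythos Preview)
Your proof is correct and follows essentially the same approach as the paper: fix a winning strategy for the appropriate player, and take the cub of $\a$'s that are good for $(t,h)$ and for which $t\rest\a$ is closed under that strategy. The paper's argument is just the terse version of what you wrote.

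The one point on which you are more careful than the paper is your cub $C_1$, guaranteeing that no internal node of $t$ of height $<\a$ becomes a leaf of $t\rest\a$; the paper's definition of ``good'' does not literally ensure this, and the paper simply asserts that $(t,h)\rest\a$ is a Borel code for good $\a$ without further comment. Your observation that this is the only genuine subtlety is accurate, and your treatment of it is clean.
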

\begin{proof}
  Suppose $(\eta,\xi)\in B$ and let $\sigma$ be a winning strategy of
  player $\PlTwo$ in $G((\eta,\xi),t,h)$. Let $C$ be the set of those
  limit $\a$ which are good for $(t,h)$ and 
  that $t\rest\a$ is closed under $\sigma$. Clearly
  $(\eta\rest\a,\xi\rest\a)\in B_{(t,h)\restl\a}$ for all $\a\in C$
  and $C$ is cub.

  Conversely, if $(\eta,\xi)\notin B$, then player $\PlOne$ has a winning 
  strategy $\tau$ in $G((\eta,\xi),t,h)$ and by closing under $\tau$ we obtain
  the needed cub set again.
\end{proof}

\begin{Lemma}\label{lem:ClassOfEqRels}
  Let $S$ be the set of Borel equivalence relations $E$ such that for some
  Borel code $(t,h)$, $E = B_{(t,h)}$ and $B_{(t,h)\restl\a}$ 
  is an equivalence
  relation for cub-many $\a < \k$. Then $S$ contains $\id$ and is closed
  under jump and the join operation $\bigoplus$ as in the definition of
  iterated jump, Definition~\ref{def:jump}.
\end{Lemma}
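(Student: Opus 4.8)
The plan is to treat all three assertions uniformly: for each of $\id$, the jump $E^+$ of a member $E\in S$, and the join $\bigoplus_{\b<\nu}E_\b$ of members $E_\b\in S$, I will exhibit a Borel code whose $\a$-th approximation $B_{(t,h)\restl\a}$ is, for cub-many $\a$, literally the corresponding relation on $2^\a$ — respectively the identity on $2^\a$, the jump of the $\a$-approximation of $E$, and the join of the $\a$-approximations of the $E_\b$. Since the identity is an equivalence relation, and since the jump and the join of equivalence relations are again equivalence relations (reflexivity, symmetry and transitivity being immediate from the set-equality and disjoint-sum descriptions in Definition~\ref{def:jump}), this places each constructed relation in $S$. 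The recurring device is that the coding homeomorphisms involved — the pairing $\k\times\k\cong\k$ behind $(2^\k)^\k\cong 2^\k$, and the disjoint-union homeomorphism behind the join — are continuous and so have cub-many closure points $\a$; intersecting these with the goodness cub set of the code and with the cub sets witnessing $E,E_\b\in S$ yields the cub set on which the approximation of the constructed code agrees with the construction applied to the approximations, while Lemma~\ref{lem:BorelApprox} guarantees that these approximations reflect the intended relation.

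For $\id$ I would use the following game code on $2^\k\times 2^\k$. At the root player $\PlOne$ chooses $\g<\k$; at the node $\la\g\ra$ player $\PlTwo$ chooses a pair $(s,s')\in(2^{\g+1})^2$ with $s(\g)=s'(\g)$, reaching a leaf with label $(s,s')$ (so player $\PlTwo$ wins iff $s\subset\eta$ and $s'\subset\xi$). Since $\k^{<\k}=\k$ there are at most $\k$ such pairs for each $\g$, so this is a legitimate subtree of $\k^{<\o}$ with all branches of length $2$. Player $\PlTwo$ can meet player $\PlOne$'s challenge $\g$ exactly when $\eta(\g)=\xi(\g)$, so $B_{(t,h)}=\id$. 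For cub-many $\a$ — the good limits that are closed under the indexing of successors — the approximation keeps precisely the challenges $\g<\a$ together with all agreeing pairs of length $<\a$, so $B_{(t,h)\restl\a}$ is the identity relation on $2^\a$; hence $\id\in S$.

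For the jump, let $E\in S$ be coded by $(t_E,h_E)$ with $B_{(t_E,h_E)\restl\a}$ an equivalence relation for $\a$ in a cub set $C_E$. Identifying $(2^\k)^\k$ with $2^\k$ via a fixed pairing, I build the code $(t_E^+,h_E^+)$ for $E^+$ by prefixing two moves to a copy of $t_E$: player $\PlOne$ first declares a side and an index (``test $z_\a$'' or ``test $w_\b$''), player $\PlTwo$ answers with a matching index $\d$, and then the play runs through $t_E$ with each leaf label $(s,s')$ reinterpreted, via the pairing, as the condition that $s$ is an initial segment of the challenged block and $s'$ of the answering block. Player $\PlTwo$ then wins exactly when every block on one side is $E$-matched by some block on the other, which is the set-equality of $E$-classes defining $E^+$. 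For $\a$ lying in $C_E$, good for the new code, and closed under the pairing, the approximation $B_{(t_E^+,h_E^+)\restl\a}$ is the jump of $B_{(t_E,h_E)\restl\a}$ on $(2^\a)^\a$; being the jump of an equivalence relation it is an equivalence relation, and the displayed set of $\a$ is cub, so $E^+\in S$.

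The join is handled analogously, with the copy index playing the role of the block index: writing $\bigoplus_{\b<\nu}E_\b$ via a disjoint-union homeomorphism in which an initial block of coordinates reads off the copy index, the code first lets player $\PlOne$ challenge agreement of the index block (routed into the identity-style segment-revealing above) and otherwise routes the play, according to the common index $\b$, into a copy of the code for $E_\b$. On the diagonal intersection $\triangle_{\b<\nu}C_\b$, intersected with the goodness cub set and the closure points of the union homeomorphism, the approximation of the join code is the join of the approximations $B_{(t_\b,h_\b)\restl\a}$, hence an equivalence relation. The main obstacle is precisely this last commutation: one must choose the homeomorphisms $(2^\k)^\k\cong 2^\k$ and $\bigsqcup_{\b<\nu}2^\k\cong 2^\k$ — especially the index coding for the join when $\nu\ge\k$, which cannot sit in a bounded initial block — tamely enough that restriction to $\a$ genuinely carries the jump and join constructions to the jump and join of the $\a$-approximations for cub-many $\a$. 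This careful bookkeeping is exactly where continuity (giving cub-many closure points) together with Lemma~\ref{lem:BorelApprox} is used.
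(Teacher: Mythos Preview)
Your proposal is correct and follows essentially the same strategy as the paper: build explicit Borel codes for $\id$, $E^+$, and $\bigoplus E_\b$ so that, on a cub of closure points of the coding bijections intersected with the witnessing cubs for the inputs, the $\a$-approximation of the new code is literally the identity, the jump, or the join of the $\a$-approximations. Your treatment is in fact more detailed than the paper's (which only says ``it is not difficult to design'' the codes for jump and join); in particular your depth-$2$ code for $\id$ (player $\PlOne$ names a coordinate, player $\PlTwo$ exhibits agreeing initial segments) differs from the paper's depth-$1$ code $t=\k^1$, $h(\a)=(p_\a,p_\a)$, but the underlying idea and the cub-closure argument are the same.
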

\begin{proof}
  Enumerate $2^{<\k}=\{p_\a\mid \a<\k\}$. Let $t=\k^1$ and 
  $h(\a)=(p_\a,p_\a)$. Clearly $\id=B_{(t,h)}$ and for those $\a$
  for which $\{p_i\mid i<\a\}=2^{<\a}$, $B_{(t,h)\restl\a}$ is the
  identity on $2^\a$ and this is clearly a cub set.
  
  Suppose $E$ is in $S$ and $(t,h)$ is a code for $E$ witnessing that
  and that $C$ is a cub set on which $E_{(t,h)}$ is an equivalence relation.
  It is not difficult to design a Borel code $(t^+,h^+)$ for the jump $E^+$
  and check that for cub many $\a\in C$, 
  $B_{(t^+,h^+)\restl\a}$ is the jump of $B_{(t,h)\rest\a}$.
 
  Similarly suppose that $E_i\in S$ are equivalence relations for $i<\k$
  and witnessing codes $(t_i,h_i)$ are given with $C_i$ cub sets 
  such that $B_{(t_i,h_i)\rest\a}$ is an equivalence relation for each
  $\a\in C_i$. Then
  it is not difficult to design a code $(t,h)$ so that $B_{(t,h)}$
  is $\bigoplus_{i<\k}E_i$ and for cub many $\a\in \nabla_{i<\k}C_i$,
  $B_{(t,h)\restl\a}$ is $\bigoplus_{i<\a}B_{(t_i,h_i)\restl\a}$
\end{proof}

It follows that $S$ contains all iterates of the jump $id^{+\b}$,
$\b < \k^+$.

\begin{Thm}
  Let $E$ be an equivalence relation in $S$. Then $E$ is reducible to
  $E^\k_{\mu\text{-cub}}$ for any regular $\mu < \k$ 
  (see Definition~\ref{def:E_Cub}).
\end{Thm}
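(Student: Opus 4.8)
The plan is to manufacture a continuous reduction $F\colon 2^\k\to\k^\k$ from $E$ to $E^\k_{\mu\text{-cub}}$ directly out of the approximations $B_{(t,h)\restl\a}$, using Lemma~\ref{lem:BorelApprox} to move between $E$ and its approximations. First I would fix a Borel code $(t,h)$ with $E=B_{(t,h)}$ together with a cub set $C$ such that $B_{(t,h)\restl\a}$ is an equivalence relation on $2^\a$ for every $\a\in C$. Intersecting $C$ with the (cub) set of ordinals that are good for $(t,h)$, I may pass to a cub $C'$ on which, simultaneously, $(t,h)\restl\a$ is a genuine Borel code for $\a$ and $B_{(t,h)\restl\a}$ is an equivalence relation.

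Since $\k^{<\k}=\k$ gives $|2^\a|\le\k$ for each $\a<\k$, for every $\a\in C'$ I would fix a function $c_\a\colon 2^\a\to\k$ that is constant on each $B_{(t,h)\restl\a}$-class and takes distinct values on distinct classes (compose a choice of class representatives with an injection $2^\a\to\k$). Then set
$$F(\eta)(\a)=c_\a(\eta\rest\a)\ \text{ if }\a\in C',\qquad F(\eta)(\a)=0\ \text{ otherwise.}$$
Each coordinate $F(\eta)(\a)$ depends only on $\eta\rest\a$, so for a basic open $[q]$ with $q\in\k^\b$ the preimage $F^{-1}[q]$ is an intersection of $\b<\k$ open sets; since the topology on $\k^\k$ is closed under intersections of length $<\k$, this is open and $F$ is continuous.

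That $F$ is a reduction then rests on two facts about a regular $\mu<\k$: every cub set is a $\mu$-cub (being closed under all limits it is closed under $\mu$-cofinal ones), and any $\mu$-cub meets any cub (interleave the two unbounded sets into an increasing $\mu$-sequence and take its supremum, which has cofinality $\mu$ and so lies in both). If $\eta$ and $\xi$ are $E$-equivalent, then $(\eta,\xi)\in B_{(t,h)}$, so by Lemma~\ref{lem:BorelApprox} we have $(\eta\rest\a,\xi\rest\a)\in B_{(t,h)\restl\a}$ for cub-many $\a$; intersecting with $C'$ yields a cub on which $F(\eta)(\a)=F(\xi)(\a)$, and this cub is a $\mu$-cub, so $F(\eta)$ and $F(\xi)$ are $E^\k_{\mu\text{-cub}}$-equivalent. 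If instead they are not $E$-equivalent, then $(\eta,\xi)\notin B_{(t,h)}$, so by Lemma~\ref{lem:BorelApprox} $(\eta\rest\a,\xi\rest\a)\notin B_{(t,h)\restl\a}$ for cub-many $\a$; intersecting with $C'$ yields a cub $C''$ on which $F(\eta)(\a)\ne F(\xi)(\a)$ (using that $c_\a$ separates distinct classes). Any $\mu$-cub contained in the agreement set $\{\a\mid F(\eta)(\a)=F(\xi)(\a)\}$ would have to meet $C''$, which is impossible; hence no such $\mu$-cub exists and $F(\eta),F(\xi)$ are not $E^\k_{\mu\text{-cub}}$-equivalent.

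The routine part is the bookkeeping for $F$: checking that the coordinatewise codings $c_\a$ can be chosen so that $F$ really lands in $\k^\k$ and is continuous, and invoking Lemma~\ref{lem:BorelApprox} correctly on both sides. The one step that I expect to carry the genuine content is the negative direction, where one must rule out a $\mu$-cub sitting inside the agreement set; this is exactly where the observation that a $\mu$-cub cannot avoid a cub is essential, and it is the reason the argument produces a reduction to $E^\k_{\mu\text{-cub}}$ rather than merely matching the two relations on a club.
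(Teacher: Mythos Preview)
Your proposal is correct and follows essentially the same approach as the paper: map $\eta$ to the sequence of codes of the $B_{(t,h)\restl\a}$-classes of $\eta\rest\a$, then invoke Lemma~\ref{lem:BorelApprox} on both sides. The only cosmetic difference is that the paper assigns the nontrivial code only at $\a$ with $\cf(\a)=\mu$ (and $0$ elsewhere), whereas you assign it on all of a fixed cub $C'$; both choices work, and your treatment of the negative direction via the fact that every $\mu$-cub meets every cub is exactly what the paper leaves implicit.
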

\begin{proof}
  Let $E$ be $B_{(t,h)}$ where $(t,h)$ witnesses that 
  $E$ belongs to $S$. To each $\eta$
  assign the function $f_\eta$ where $f_\eta(\a)$ is a code for the
  $B_{(t,h)\restl\a}$ equivalence class of $\eta\rest\a$ 
  (if $\cf(\a) = \mu$ and
  $B_{(t,h)\restl\a}$ is an equivalence relation, $0$ otherwise). 
  By Lemma \ref{lem:BorelApprox}, if $\eta E \xi$ then
  $f_\eta(\a) = f_\xi(\a)$ for $\mu$-cub-many $\a$ and if $\lnot \eta E \xi$ then
  $f_\eta(\a) \neq f_\xi(\a)$ for $\mu$-cub-many~$\a$.
\end{proof}

\begin{Cor}
  The iterated jumps $id^{\a+}$ of the identity are reducible to
  $E^\k_{\mu\text{-cub}}$ for each regular $\mu < \k$. \qed
\end{Cor}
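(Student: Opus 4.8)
The plan is to derive the corollary directly from the two preceding results: the closure properties of the class $S$ recorded in Lemma~\ref{lem:ClassOfEqRels}, and the fact, established in the Theorem above, that every member of $S$ reduces to $E^\k_{\mu\text{-cub}}$. The only thing that needs checking is that each iterated jump $\id^{\a+}$, $\a<\k^+$, actually lies in $S$; once this is known the reduction $\id^{\a+}\le_B E^\k_{\mu\text{-cub}}$ is immediate, and it holds uniformly for every regular $\mu<\k$ because the Theorem is stated for an arbitrary such $\mu$.

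To see that $\id^{\a+}\in S$ for all $\a<\k^+$ I would argue by transfinite induction on $\a$, following the recursive definition of the iterated jump in Definition~\ref{def:jump}. For the base case $\id^{0+}=\id$, and Lemma~\ref{lem:ClassOfEqRels} gives $\id\in S$. For a successor we have $\id^{(\a+1)+}=(\id^{\a+})^+$; since $\id^{\a+}\in S$ by the induction hypothesis and $S$ is closed under the jump operation, $\id^{(\a+1)+}\in S$. For a limit $\a$ we have $\id^{\a+}=\bigoplus_{\b<\a}\id^{\b+}$, and each $\id^{\b+}$ with $\b<\a$ belongs to $S$ by the induction hypothesis; as $S$ is closed under the join $\bigoplus$, it follows that $\id^{\a+}\in S$. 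This is precisely the remark already recorded immediately after Lemma~\ref{lem:ClassOfEqRels}.

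The one point that deserves a word is the limit stage. For $\a<\k^+$ a limit ordinal we have $|\a|\le\k$, so the join $\bigoplus_{\b<\a}$ ranges over at most $\k$ many equivalence relations. Re-indexing the family $(\id^{\b+})_{\b<\a}$ through a bijection of $|\a|$ with a subset of $\k$ (padding with copies of $\id$ if one prefers a family of length exactly $\k$) places us in exactly the situation covered by the closure of $S$ under $\bigoplus$ in Lemma~\ref{lem:ClassOfEqRels}, whose witnessing codes and cub sets are then assembled as in that lemma. With $\id^{\a+}\in S$ in hand, applying the Theorem yields $\id^{\a+}\le_B E^\k_{\mu\text{-cub}}$ for every regular $\mu<\k$, which is the assertion of the corollary. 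There is no genuine obstacle here---the substantive work lies in Lemma~\ref{lem:ClassOfEqRels} and in the Theorem---beyond this bookkeeping about the length of the limit-stage join.
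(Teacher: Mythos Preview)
Your proposal is correct and follows exactly the paper's intended route: the paper records immediately after Lemma~\ref{lem:ClassOfEqRels} that $S$ contains all iterated jumps $\id^{\a+}$ (by the transfinite induction you spell out), and the corollary then follows at once from the preceding Theorem, which is why the paper gives no proof beyond the \qed. Your remark about re-indexing the limit-stage join through a bijection with $\k$ is a fair bit of bookkeeping that the paper leaves implicit.
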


\begin{Cor}\label{cor:Borel}
  If $\MM$ is a Borel class of models such that $\cong_\MM$, the isomorphism
  relation on $\MM$ is Borel, then $\cong_{\MM}$ is Borel reducible to 
  $E^\k_{\mu\text{-cub}}$ for all regular $\mu<\k$. 
\end{Cor}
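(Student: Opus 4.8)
The plan is to show that every Borel isomorphism relation $\cong_\MM$ lies in the class $S$ of Lemma~\ref{lem:ClassOfEqRels}; once this is established, the preceding Theorem immediately yields $\cong_\MM\le_B E^\k_{\mu\text{-cub}}$ for every regular $\mu<\k$, which is exactly the assertion of Corollary~\ref{cor:Borel}. So the entire task reduces to producing a single Borel code $(t,h)$ with $B_{(t,h)}=\cong_\MM$ whose approximations $B_{(t,h)\restl\a}$ are equivalence relations for cub-many $\a<\k$.

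First I would fix a relational coding of the models in $\MM$ as elements of $2^\k$, so that each $\eta\in 2^\k$ names a structure $A_\eta$ with universe $\k$ and so that, for $\a<\k$, the truncation $\eta\restl\a$ names the induced substructure $A_\eta\restl\a$ with universe $\a$ (using $\k^{<\k}=\k$ to fold a vocabulary of size $\le\k$ into a single relation). The point is to code isomorphism by a back-and-forth Ehrenfeucht--Fra\"iss\'e game: let $G$ be the game in which the players alternately enumerate the universes of $A_\eta$ and $A_\xi$ and build a matching, $\PlTwo$ winning exactly when the matching produced is an isomorphism. For structures of size $\k$ the existence of a winning strategy for $\PlTwo$ in the full-length version of this game is equivalent to $A_\eta\cong A_\xi$, so the game decides $\cong_\MM$ correctly; this is what rules out the coarser relation one would get from merely asking that cub-many substructure-approximations be isomorphic.

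The two features I must extract are: (i) ``$\PlTwo$ wins $G(\eta,\xi)$'' defines an equivalence relation, and (ii) the game admits a Borel code whose $\a$-approximation is the same game played on the substructures $A_{\eta\restl\a}$ and $A_{\xi\restl\a}$. For (i), reflexivity comes from the copying strategy, symmetry from exchanging the roles of the two structures, and transitivity from composing two winning strategies; since these operations are local, the identical argument applies to the game on $\a$-structures, so each approximation is again an equivalence relation. For (ii), the game tree of $G$ is well-founded, so it \emph{is} a Borel code in the sense of the code definition, and for every good $\a$ the truncation of this tree is literally the code for the corresponding game on structures with universe $\a$. Applying Lemma~\ref{lem:BorelApprox} to this code shows that $B_{(t,h)}$ agrees with $\cong_\MM$ on every pair while its approximations are equivalence relations on cub-many good $\a$; hence $\cong_\MM\in S$.

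The main obstacle is the passage from ``$\cong_\MM$ is Borel'' to ``$\cong_\MM$ is decided by a \emph{single} game of bounded rank, representable as a well-founded Borel code.'' In general the isomorphism game for size-$\k$ models is only $\Sii$, so its natural tree of attempts is ill-founded; the extra hypothesis that $\cong_\MM$ is $\Dii$ must be used, via a boundedness/rank argument, to pin the relevant game rank below $\k^+$ and thereby replace the $\Sii$ tree by a well-founded one that still computes $\cong_\MM$. Once this rank bound is in hand, the identification of the truncations of the code with games on substructures, and the verification that these are equivalence relations for cub-many good $\a$, is routine, and the conclusion follows from the preceding Theorem exactly as stated.
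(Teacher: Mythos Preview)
Your route is different from the paper's. The paper does not attempt to show $\cong_\MM\in S$ directly; instead it invokes the classical fact (generalised from \cite[Lemma~12.2.7]{Gao}) that a Borel isomorphism relation is Borel reducible to some iterated jump $\id^{\a+}$, $\a<\k^+$, and then applies the preceding Corollary, which already places every $\id^{\a+}$ in~$S$. In effect the paper factors the problem as $\cong_\MM\le_B\id^{\a+}\le_B E^\k_{\mu\text{-cub}}$, using Lemma~\ref{lem:ClassOfEqRels} only on the iterated jumps and never needing a Borel code for $\cong_\MM$ itself whose truncations are equivalence relations.

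Your approach is conceptually close---the bounded-rank EF hierarchy and the iterated Friedman--Stanley jumps are two presentations of the same Scott analysis---but the paper's packaging is cleaner for exactly the reason you identify as the ``main obstacle.'' The step from ``$\cong_\MM$ is Borel'' to ``there is a fixed ordinal $\b<\k^+$ such that the rank-$\b$ EF game decides isomorphism on $\MM$'' is precisely the boundedness argument underlying the reduction to $\id^{\b+}$; you name it but do not carry it out, and once one has it, the reduction to an iterated jump is immediate and avoids the delicate encoding you sketch (arranging that leaf labels capture ``partial isomorphism'' conditions and that the tree truncation $t\cap\a^{<\o}$ literally is the EF game on the $\a$-substructures). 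Nothing in your outline is wrong, and the observation that bounded-rank EF equivalence is always an equivalence relation is exactly what one needs for the truncation condition; but the paper's route reaches the goal with less bespoke coding by routing through machinery already built in Lemma~\ref{lem:ClassOfEqRels}.
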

\begin{proof}
  Using similar techniques as in classical descriptive set theory
  (see e.g. \cite[Lemma 12.2.7]{Gao}) one can show that a Borel isomorphism
  can be reduced to an iterated jump of identity. 
\end{proof}

\begin{Cor}
  Suppose $T$ a countable complete first-order classifiable 
  (superstable with NDOP and NOTOP) and 
  shallow theory, then $\cong^\k_T\,\le_B E^\k_{\mu\text{-cub}}$.
\end{Cor}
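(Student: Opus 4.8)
The plan is to obtain this as an immediate instance of Corollary~\ref{cor:Borel}. That corollary reduces any Borel isomorphism relation on a Borel class of models to $E^\k_{\mu\text{-cub}}$, so the whole task is to check that a classifiable shallow $T$ satisfies its two hypotheses: that the class $\Mod^\k_T$ of models of $T$ with universe $\k$ is Borel, and that $\cong^\k_T$ is a Borel equivalence relation.

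The first hypothesis is routine. A structure on $\k$ lies in $\Mod^\k_T$ precisely when it satisfies each of the countably many first-order sentences of $T$, and under the standard coding each such condition is closed; hence $\Mod^\k_T$ is in fact a closed, so Borel, subset of the coding space.

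The substance of the argument is the second hypothesis, the Borelness of $\cong^\k_T$, and here I would invoke Shelah's classification theory together with shallowness. For a classifiable theory (superstable with NDOP and NOTOP) every model decomposes along a well-founded tree into a system of countable submodels, and two models are isomorphic if and only if the associated trees of dimensional invariants agree. Shallowness bounds the well-founded rank of these decomposition trees by the depth of $T$, a fixed countable ordinal independent of the models. Because the trees then have bounded depth, the back-and-forth comparison of the two invariant trees --- matching the dimensions of regular types level by level --- terminates after boundedly many rounds and can be arranged as a Borel procedure. This is exactly the content of the corresponding theorem in the generalised descriptive set theory of isomorphism relations \cite{FHK}, and it yields that $\cong^\k_T$ is Borel.

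With both hypotheses verified, Corollary~\ref{cor:Borel} applies directly and gives $\cong^\k_T \le_B E^\k_{\mu\text{-cub}}$ for every regular $\mu < \k$. The one genuinely hard step is the Borelness of $\cong^\k_T$: it is not formal and rests on the structure theory for classifiable shallow theories (the decomposition into bounded-depth invariant trees), whereas the closedness of $\Mod^\k_T$ and the final reduction are immediate once Corollary~\ref{cor:Borel} is in hand.
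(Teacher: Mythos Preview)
Your proposal is correct and follows essentially the same approach as the paper: invoke the result from \cite{FHK} that the isomorphism relation of a classifiable shallow theory is Borel, and then apply Corollary~\ref{cor:Borel}. You add the routine verification that $\Mod^\k_T$ is closed and some exposition of the structure theory behind Borelness, but the logical skeleton is identical to the paper's two-line proof.
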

\begin{proof}
  By \cite[Theorem 68]{FHK} the isomorphism relation of a classifiable
  shallow theory is Borel, so we apply Corollary \ref{cor:Borel}.
\end{proof}

We have shown in
\cite[Theorem 75]{FHK} that
under certain cardinality assumptions on $\k$, a complete countable
first-order theory $T$ is classifiable
if and only if for all regular $\mu<\k$, $E^2_{\mu\text{-cub}}\not\le_B \ \cong_T$,
see Definition~\ref{def:E_Cub},
where $\cong_T$ is the isomorphism on $\Mod(T)$.
Clearly $E^2_{\mu\text{-cub}}\le_B E^\k_{\mu\text{-cub}}$.

\begin{Question}\label{q:CUBreducibility}
  Is $E^\k_{\mu\text{-cub}}$ reducible to $E^2_{\mu\text{-cub}}$?
\end{Question}

If the answer to Question~\ref{q:CUBreducibility} is ``yes'' then using
\cite[Theorem 75]{FHK} we obtain: 
Suppose $T_1$ and $T_2$ are complete first-order theories with
$T_1$ classifiable and shallow and $T_2$ non-classifiable.
Also suppose that $\k=\l^+=2^\l>2^\o$ where $\l^{<\l}=\l$. Then
$\cong_{T_1}$ is Borel reducible to~$\cong_{T_2}$.

\bibliography{ref}{}
\bibliographystyle{alpha}

\end{document}